\documentclass[11pt,a4paper]{article}

\usepackage[margin=1in]{geometry}
\usepackage{palatino}
\usepackage{subcaption}

\usepackage{enumitem}

\usepackage[
  colorlinks,
  linkcolor = blue,
  citecolor = blue,
  urlcolor = blue]{hyperref}

\usepackage{amsmath,amssymb,amsthm}
\usepackage{mathtools}
\mathtoolsset{centercolon}

\usepackage[affil-it]{authblk}
\usepackage{tabu}
\usepackage{blkarray}

\makeatletter
\newtheorem*{rep@theorem}{\rep@title}
\newcommand{\newreptheorem}[2]{%
\newenvironment{rep#1}[1]{%
 \def\rep@title{#2 \ref{##1}}%
 \begin{rep@theorem}}%
 {\end{rep@theorem}}}
\makeatother

\newreptheorem{theorem}{Theorem}
\newreptheorem{lemma}{Lemma}

\newtheorem{theorem}{Theorem}[section]
\newtheorem*{theorem*}{Theorem}
\newtheorem{lemma}[theorem]{Lemma}
\newtheorem{cor}[theorem]{Corollary}

\theoremstyle{definition}
\newtheorem{definition}[theorem]{Definition}
\newtheorem{remark}[theorem]{Remark}
\newtheorem{example}[theorem]{Example}

\newcommand{\N}{\mathbb{N}}
\newcommand{\C}{\mathbb{C}}
\newcommand{\Z}{\mathbb{Z}}
\newcommand{\F}{\mathbb{F}}

\DeclareMathOperator{\aut}{Aut}

\title{Quantum isomorphic strongly regular graphs from the $E_8$ root system}
\author{Simon Schmidt\footnote{sisc@math.ku.dk}}
\affil{QMATH, Department of Mathematical Sciences, University of Copenhagen, Universitetsparken 5, 2100 Copenhagen \O, Denmark}
\date{\today}

\begin{document}
\maketitle
\begin{abstract}
In this article, we give a first example of a pair of quantum isomorphic, non-isomorphic strongly regular graphs, that is, non-isomorphic strongly regular graphs having the same homomorphism counts from all planar graphs. The pair consists of the orthogonality graph of the $120$ lines spanned by the $E_8$ root system and a rank $4$ graph whose complement was first discovered by Brouwer, Ivanov and Klin. Both graphs are strongly regular with parameters $(120, 63, 30, 36)$. Using Godsil-McKay switching, we obtain more quantum isomorphic, non-isomorphic strongly regular graphs with the same parameters. 
\end{abstract}

\section{Introduction}
The notion of quantum isomorphism of graphs was first introduced by Aterias et al in \cite{AMRSSV}. Two graphs are quantum isomorphic if there exists a perfect quantum strategy for the so called isomorphism game, a nonlocal game in which Alice and Bob want to convince a referee that they know an isomorphism between two graphs $G_1$ and $G_2$. Interestingly, Aterias et al constructed pairs of graphs for which there are perfect quantum strategies for the isomorphism game, but no perfect classical strategies. This shows that there are graphs that are quantum isomorphic, but not isomorphic. 

It is well-known that two graphs are isomorphic if there exists a permutation matrix interchanging the adjacency matrices of the graphs. It was shown in \cite{AMRSSV} that two graphs are quantum isomorphic if and only if there exists a quantum permutation matrix $u$ interchanging the adjacency matrices of the two graphs. Here a quantum permutation matrix is a matrix $u\in M_n(\mathcal{A})$ with entries in some unital $C^*$-algebra $\mathcal{A}$ fulfilling $u_{ij}=u_{ij}^*=u_{ij}^2$ and $\sum_k u_{ik}=1_{\mathcal{A}}=\sum_k u_{ki}$, which was first studied in terms of quantum permutation groups by Wang \cite{WanSn}.
A completely combinatorial description of quantum isomorphism was given by Man\v{c}inska and Roberson in \cite{MRplanar}. They showed that quantum isomorphism is equivalent to having the same homomorphism counts from all planar graphs. 

Despite having many equivalent formulations, only a few constructions of quantum isomorphic, non-isomorphic graphs are known. In \cite{AMRSSV}, the graphs where constructed from quantum solutions of binary constraint systems. Roberson and the author (\cite{RS}) used colored versions of the graphs constructed in \cite{AMRSSV} and a decoloring procedure to obtain new quantum isomorphic, non-isomorphic graphs, but those still come from quantum solutions of binary constraint systems. 
A more general approach was presented by Musto, Reutter and Verdon in \cite{MRV}. In the article, they obtain quantum isomorphic graphs from a central type subgroup of the automorphism group of one of the graphs, having coisotropic stabilizers. They could find the graphs of \cite{AMRSSV} in this way, but they did not construct new examples. Recently, Chan and Martin \cite{CM} and Gromada \cite{G} have shown that Hadamard graphs of the same order are quantum isomorphic. 

In this article, we will construct a new pair of quantum isomorphic, non-isomorphic graphs from a subgroup of the automorphism group of a graph. This subgroup is associated to $3$-tensor Pauli matrices and is a central type subgroup with coisotropic stabilizers. We will explicitly construct the quantum permutation matrix interchanging the adjacency matrices from the automorphisms in the subgroup. The quantum isomorphic graphs are both strongly regular with parameters $(120, 63, 30, 36)$ and known from the graph theory literature. The first graph is the orthogonality graph of the lines in the $E_8$ root system, see also \cite[Section 10.39]{BVM}. The second graph can be obtained from independent sets of the folded halved $8$-cube graph. Its complement was first discovered by Brouwer, Ivanov and Klin in \cite{BIK} as a graph from a quadric with a hole. 
One can obtain one graph from the other by switching the underlying partial geometry, as shown by Mathon and Street \cite{MS}. Note that our first graph is isomorphic to $G_1$ and the second graph is isomorphic to $G_2$ -- $G_5$ in \cite[Table 2.2]{MS}. 

We will also show that using Godsil-McKay switching on both graphs with the same, appropriate vertex partition preserves quantum isomorphism. We get more examples of quantum isomorphic, but non-isomorphic strongly regular graphs in this way. 

The article is structured as follows. In Section \ref{secprelim}, we recall basic notions of graph theory and give the definition of quantum isomorphism of graphs. In Section \ref{secpairqiso}, we will first define the orthogonality graph of the lines in the $E_8$ root system and study a specific subgroup of the automorphism group. Then, we look at a graph coming from independent sets of the folded halved $8$-cube graph and give an alternative description of this graph. Using this description, we show that the two mentioned graphs are quantum isomorphic. Finally, we obtain more quantum isomorphic, non-isomorphic strongly regular graphs using Godsil-McKay switching in Section \ref{secswitching}.

\section{Preliminaries}\label{secprelim}
We start by recalling basic notions of graph theory. In this article, a graph $G$ is always finite and simple, that is, it has a finite vertex set $V(G)$ and has no multiple edges or loops. Thus, the edge set $E(G)$ is a subset of $V(G)\times V(G)$, where $(i,j)\in E(G)$ implies $(j,i)\in E(G)$. The \emph{adjacency matrix} $A_G$ of a graph $G$ is the matrix with entries $(A_G)_{ij}=1$ if $(i,j)\in E(G)$ and $(A_G)_{ij}=0$ otherwise. The complement of a graph $G$, which we denote by $\overline{G}$, is the graph with the same vertex set as $G$ and $(i,j)\in E(\overline{G})$ if and only if $(i,j)\notin E(G)$. 
A \emph{clique} is a subset $C\subseteq V(G)$ of vertices such that all vertices in $C$ are adjacent. The \emph{clique number} of $G$ is the size of a largest clique in $G$. An \emph{independent set} is a subset $I\subseteq V(G)$ of vertices such that all vertices in $I$ are non-adjacent. The $\emph{independence number}$ of $G$ is the size of a largest independent set in $G$.

The vertex $j$ is called \emph{neighbor} of $i$ if $(i,j)\in E(G)$. A graph is \emph{$k$-regular} if every vertex has $k$ neighbors. A \emph{path} of length $m$ joining two vertices $i,k \in V(G)$ is a sequence of vertices $a_0, a_1, \dots, a_m$ with $i=a_0$, $k=a_m$ such that $(a_n,a_{n+1})\in E(G)$ for $n\in \{0, \dots m-1\}$. The \emph{distance} $d(i,k)$ between vertices $i,k\in V(G)$ denotes the length of a shortest path joining $i$ and $k$. 

\begin{definition}
Let $G$ be a $k$-regular graph with $n$ vertices. We say that the graph $G$ is \emph{strongly regular} if there exist $\lambda, \mu\in \N_0$ such that
\begin{itemize}
    \item[(i)] adjacent vertices have $\lambda$ common neighbors,
    \item[(ii)] non-adjacent vertices have $\mu$ common neighbors. 
\end{itemize}
We then say that $G$ is strongly regular with parameters $(n, k, \lambda, \mu)$.
\end{definition}

A \emph{graph automorphism} is a bijection $\sigma:V(G)\to V(G)$ such that $(i,j)\in E(G)$ if and only if $(\sigma(i), \sigma(j))\in E(G)$. The set of graph automorphisms form a group, the \emph{automorphism group} $\aut(G)$. For a subgroup $K\subseteq \aut(G)$ and a vertex $v\in V(G)$, we call $\mathrm{Stab}_K(v)=\{\sigma \in K \,|\, \sigma(v)=v\}$ the \emph{stabilizer subgroup} of $K$ with respect to $v$.

An \emph{isomorphism} between graphs $G_1$ and $G_2$ is a bijection $\varphi:V(G_1) \to V(G_2)$ such that $(i,j)\in E(G_1)$ if and only if $(\varphi(i), \varphi(j))\in E(G_2)$. It is easy to see that there exists an isomorphism between the graphs $G_1$ and $G_2$ if and only if there exists a permutation matrix $P_{\varphi}$ such that $A_{G_1}P_{\varphi}=P_{\varphi}A_{G_2}$. 

\begin{definition}[\cite{WanSn}]
Let $\mathcal{A}$ be a unital $C^*$-algebra. A matrix $u\in M_n(\mathcal{A})$ is called \emph{quantum permutation matrix} or \emph{magic unitary} if the entries $u_{ij}\in \mathcal{A}$ are projections, i.e. $u_{ij}=u_{ij}^*=u_{ij}^2$ for all $i,j$ and 
\begin{align*}
    \sum_k u_{ik}=1_{\mathcal{A}}=\sum_k u_{ki}
\end{align*}
for all $i$. 
\end{definition}
If $\mathcal{A}=\C$, then $u$ is a quantum permutation matrix if and only if it is a permutation matrix. For the quantum permutation matrices appearing in this article, we will have $\mathcal{A}=M_8(\C)$. 

The concept of quantum isomorphism was first introduced in \cite{AMRSSV} as perfect quantum strategies of the isomorphism game. We will use an equivalent definition, established in \cite{LMR}. 

\begin{definition}
Let $G_1$ and $G_2$ be graphs. We say that $G_1$ and $G_2$ are \emph{quantum isomorphic} if there exists a unital $C^*$-algebra $\mathcal{A}$ and a quantum permutation matrix $u\in M_n(\mathcal{A})$ such that $A_{G_1}u=uA_{G_2}$, which means $\sum_k(A_{G_1})_{ik}u_{kj}=\sum_k u_{ik}(A_{G_2})_{kj}$ for all $i\in V(G_1), j \in V(G_2)$.
\end{definition}

The following lemma gives an equivalent relation to $A_{G_1}u=uA_{G_2}$, see also \cite[Theorem 2.2 and Theorem 2.5]{LMR}. We give a proof similar to \cite[Proposition 2.1.3]{Sthesis} which gives the same equivalence for quantum automorphism groups. 

\begin{lemma}\label{lemprod0}
Let $G_1$ and $G_2$ be graphs, $\mathcal{A}$ be a unital $C^*$-algebra and $u\in M_n(\mathcal{A})$ be a quantum permutation matrix. Then $A_{G_1}u=uA_{G_2}$ is equivalent to $u_{ij}u_{kl}=0$ if $(i,k)\notin E(G_1)$ and $(j,l)\in E(G_2)$ or vice versa.
\end{lemma}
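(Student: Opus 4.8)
The plan is to extract from the magic unitary axioms the orthogonality relations that are really doing the work, and then use two-sided multiplication to move between the matrix equation and the product relations in each direction.

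First I would record the standard consequence of $\sum_k u_{ik}=1_{\mathcal A}=\sum_k u_{ki}$ that the rows and columns of $u$ are orthogonal families of projections: for a fixed row $i$ one has $u_{ij}u_{ij'}=0$ whenever $j\neq j'$, and likewise $u_{ij}u_{i'j}=0$ for $i\neq i'$ in each column. This follows by multiplying $\sum_k u_{ik}=1_{\mathcal A}$ on the left by $u_{ij}$ to get $\sum_{k\neq j}u_{ij}u_{ik}=0$, then sandwiching with $u_{ij}$ on the right: the element $\sum_{k\neq j}u_{ij}u_{ik}u_{ij}$ is a sum of positive terms $(u_{ik}u_{ij})^*(u_{ik}u_{ij})$ equal to zero, forcing each $u_{ik}u_{ij}=0$. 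The column relations are identical using $\sum_k u_{ki}=1_{\mathcal A}$.

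For the forward direction I would assume $A_{G_1}u=uA_{G_2}$ and read off its $(i,l)$ entry, namely $\sum_{a:(i,a)\in E(G_1)}u_{al}=\sum_{b:(b,l)\in E(G_2)}u_{ib}$. Multiplying this on the left by $u_{ij}$ and applying row-$i$ orthogonality collapses the right-hand side to a single surviving term, giving $\sum_{a:(i,a)\in E(G_1)}u_{ij}u_{al}=u_{ij}$ if $(j,l)\in E(G_2)$ and $0$ otherwise. Multiplying the resulting identity on the right by $u_{kl}$ and using column-$l$ orthogonality $u_{al}u_{kl}=\delta_{ak}u_{kl}$ isolates exactly one term on the left, yielding
\[
  \varepsilon_1\,u_{ij}u_{kl}=\varepsilon_2\,u_{ij}u_{kl},
\]
where $\varepsilon_1=1$ if $(i,k)\in E(G_1)$ and $\varepsilon_1=0$ otherwise, and $\varepsilon_2=1$ if $(j,l)\in E(G_2)$ and $\varepsilon_2=0$ otherwise. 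If exactly one of the two edge conditions holds, one side is $u_{ij}u_{kl}$ and the other is $0$, so $u_{ij}u_{kl}=0$, which covers both the stated case and its ``vice versa''.

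For the reverse direction I would assume the product relations and compute the two sides of the matrix equation entrywise. Writing $(A_{G_1}u)_{ij}=\sum_{a:(i,a)\in E(G_1)}u_{aj}$ and inserting $1_{\mathcal A}=\sum_b u_{ib}$ on the left turns this into $\sum_{a:(i,a)\in E(G_1)}\sum_b u_{ib}u_{aj}$; for each fixed $a$ with $(i,a)\in E(G_1)$ the hypothesis annihilates every term with $(b,j)\notin E(G_2)$, leaving $\sum_{a:(i,a)\in E(G_1)}\sum_{b:(b,j)\in E(G_2)}u_{ib}u_{aj}$. Symmetrically, $(uA_{G_2})_{ij}=\sum_{b:(b,j)\in E(G_2)}u_{ib}$, and inserting $1_{\mathcal A}=\sum_a u_{aj}$ on the right, then pruning terms with $(i,a)\notin E(G_1)$, gives the same double sum. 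Hence the two entries agree and $A_{G_1}u=uA_{G_2}$.

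The main obstacle is the forward direction: isolating a single product $u_{ij}u_{kl}$ out of a sum requires the two-sided multiplication trick, applying row orthogonality on one side and column orthogonality on the other, and keeping the four indices straight. Loop-freeness of the graphs makes the degenerate cases $i=k$ and $j=l$ automatically consistent (the relevant indicator is then $0$ and the summation range never reintroduces the offending index), so no separate case analysis is needed; the orthogonality lemma is the real engine throughout.
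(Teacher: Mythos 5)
Your proposal is correct and follows essentially the same route as the paper: the forward direction sandwiches the $(i,l)$ entry of $A_{G_1}u=uA_{G_2}$ between $u_{ij}$ on the left and $u_{kl}$ on the right and uses row/column orthogonality of the quantum permutation matrix, while the reverse direction inserts $1_{\mathcal{A}}$ as a full row or column sum and prunes the products that vanish by hypothesis. The only differences are cosmetic: you prove the row/column orthogonality relations explicitly via the positivity argument (the paper uses $u_{ij}u_{it}=\delta_{jt}u_{ij}$ and $u_{sl}u_{kl}=\delta_{ks}u_{kl}$ as standard facts), and your indicator comparison $\varepsilon_1 u_{ij}u_{kl}=\varepsilon_2 u_{ij}u_{kl}$ handles both edge cases at once where the paper treats one case and notes the other is similar.
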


\begin{proof}
Assume $A_{G_1}u=uA_{G_2}$ and let $(i,k)\notin E(G_1)$, $(j,l)\in E(G_2)$. From $A_{G_1}u=uA_{G_2}$, we get $\sum_{s;(i,s)\in E(G_1)}u_{sl}=\sum_{t:(t,l)\in E(G_2)} u_{it}$. Using this, we calculate
\begin{align*}
    u_{ij}u_{kl}= u_{ij}\left(\sum_{t:(t,l)\in E(G_2)} u_{it}\right)u_{kl}=u_{ij}\left(\sum_{s;(i,s)\in E(G_1)}u_{sl}\right)u_{kl}=0.
\end{align*}
The first equality holds because we have $u_{ij}u_{it}=\delta_{jt}u_{ij}$ and $j$ is part of the sum since $(j,l)\in E(G_2)$. The last equality holds because $u_{sl}u_{kl}=\delta_{ks}u_{kl}$ and $k$ is not part of the sum since $(i,k)\notin E(G_1)$. One similarly gets $u_{ij}u_{kl}=0$ for $(i,k)\in E(G_1), (j,l)\notin E(G_2)$. 

Now assume that we have $u_{ij}u_{kl}=0$ if $(i,k)\notin E(G_1)$ and $(j,l)\in E(G_2)$ or vice versa. Using this and $\sum_s u_{sl}=1_{\mathcal{A}}=\sum_t u_{it}$, we get
\begin{align*}
    \sum_{t;(t,l)\in E(G_2)}u_{it}&=\left(\sum_{t;(t,l)\in E(G_2)}u_{it}\right)\left(\sum_{s;(i,s)\in E(G_1)}u_{sl}\right)\\
    &=\sum_{s;(i,s)\in E(G_1)}\left(\sum_{t=1}^nu_{it}\right)u_{sl}\\
    &=\sum_{s;(i,s)\in E(G_1)}u_{sl}
\end{align*}
for all $i,l$. 
\end{proof}

Interestingly, quantum isomorphism of graphs is connected to homomorphism counts of planar graphs in the following way. 

\begin{theorem}\cite{MRplanar}
Let $G_1$ and $G_2$ be graphs. Then $G_1$ and $G_2$ are quantum isomorphic if and only if they admit the same number of homomorphisms from any planar graph. 
\end{theorem}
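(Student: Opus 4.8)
The plan is to prove the two implications separately, working throughout with the tensor-network (equivalently, bi-labeled graph) description of homomorphism counts. Recall that for a graph $F$ one can express $\hom(F,G)$ as the scalar obtained by placing a copy of the adjacency matrix $A_G$ on every edge of $F$, a ``copy tensor'' $e_i \mapsto e_i \x \cdots \x e_i$ on every vertex, and contracting all indices, i.e. summing over all maps $V(F)\to V(G)$. More generally, to a graph $F$ equipped with ordered tuples of distinguished ``output'' and ``input'' vertices one associates a homomorphism matrix $M_F(G)$ whose $(\vec{x},\vec{y})$ entry counts the homomorphisms $F\to G$ sending the distinguished vertices to $\vec{x},\vec{y}$. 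These matrices are closed under composition (gluing along inputs and outputs), tensor product (disjoint union), and transposition, and the planar such $F$ generate a monoidal category $\mathcal{P}$ of matrices; the scalar $\hom(F,G)$ is recovered by closing off all legs.

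For the easy direction, suppose $G_1$ and $G_2$ are quantum isomorphic, witnessed by a magic unitary $u\in M_n(\mathcal{A})$ with $A_{G_1}u=uA_{G_2}$. I would show $\hom(F,G_1)=\hom(F,G_2)$ for every planar $F$ by inserting $u$ along a planar cut of the tensor network and sliding it through. The magic unitary relations $\sum_k u_{ik}=1_{\mathcal{A}}=\sum_k u_{ki}$ and $u_{ij}u_{ik}=\delta_{jk}u_{ij}$ say precisely that $u$ intertwines the copy tensors, while the hypothesis $A_{G_1}u=uA_{G_2}$ lets $u$ pass through every adjacency-matrix edge. Because $F$ is planar, one can route the propagating strands of $u$'s past all vertices and edges without crossings; once the network is fully traversed the closed diagram evaluates $\hom(F,G_1)$ against $\hom(F,G_2)$, and applying a counit (trace) collapses the $\mathcal{A}$-valued bookkeeping to equality of the two integer counts.

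For the converse I would invoke a Tannaka--Krein style reconstruction. The equalities $\hom(F,G_1)=\hom(F,G_2)$ for all planar $F$, upgraded to the level of homomorphism matrices, say that the two assignments $F\mapsto M_F(G_1)$ and $F\mapsto M_F(G_2)$ agree as functors on the planar category $\mathcal{P}$; equivalently, the planar homomorphism matrices of $G_1$ and $G_2$ span intertwiner spaces identified by an isometry relating the two representations. One then recognizes these intertwiner spaces as exactly the morphism spaces of the representation category of the quantum symmetric group, adapted to the pair $(A_{G_1},A_{G_2})$. The Tannaka--Krein duality for compact quantum groups reconstructs from this matching datum a unital $C^*$-algebra $\mathcal{A}$ and a magic unitary $u\in M_n(\mathcal{A})$ with $A_{G_1}u=uA_{G_2}$, which by definition is a quantum isomorphism.

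The main obstacle is the hard direction, and within it the combinatorial heart is proving that the homomorphism matrices of \emph{planar} bi-labeled graphs span exactly the intertwiner spaces attached to the quantum symmetric group -- no more and no less. The inclusion ``planar counts are intertwiners'' is the soft part; the reverse, that every relevant intertwiner is realized by a planar gadget, so that matching counts forces a genuine quantum permutation rather than some weaker combinatorial equivalence, requires a delicate analysis of how planar diagrams generate the category. This is where planarity is indispensable: dropping it and allowing all graphs $F$ would, by Lov\'asz's theorem, force ordinary isomorphism, so the argument must use planarity essentially in order to land in the strictly larger quantum category. This technical core is precisely what is carried out in \cite{MRplanar}.
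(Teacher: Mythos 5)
First, a structural point: the paper does not prove this statement at all --- it is imported verbatim, with citation, from \cite{MRplanar}, and none of the machinery of that proof appears anywhere in this article. So there is no internal proof to compare against, and your proposal has to be judged on its own terms as a reconstruction of the cited argument.

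On those terms, your sketch names the right objects and follows the genuine Man\v{c}inska--Roberson strategy (bi-labeled graphs and homomorphism matrices, the easy direction via the magic unitary, the hard direction via Tannaka--Krein reconstruction), but it is a roadmap rather than a proof, and the two places where you wave your hands are exactly the places where all the work lives. First, the spanning theorem --- that homomorphism matrices of planar bi-labeled graphs span precisely the intertwiner spaces of the relevant quantum automorphism group --- is only asserted; note that even your ``easy'' direction secretly depends on it, since the rigorous way to ``slide $u$ through the network'' is an induction over a generating decomposition of planar bi-labeled graphs into adjacency matrices and copy tensors under composition, tensor product and transposition, not a heuristic routing of strands past vertices. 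Second, in the reconstruction step you need the assignment $M_F(G_1) \mapsto M_F(G_2)$ to be well defined on spans, i.e.\ to preserve all linear dependencies among homomorphism matrices. This is precisely where the hypothesis of equal \emph{scalar} counts enters: the inner product of two homomorphism matrices is itself the homomorphism count of a glued graph, gluing planar bi-labeled graphs along their boundary labels yields a planar graph, hence equal planar counts force equal Gram matrices, and only then does the isometry you invoke exist. Your phrase ``upgraded to the level of homomorphism matrices'' buries this argument; without it the functor you feed into Tannaka--Krein duality is not known to exist. Since both of these steps are deferred to \cite{MRplanar} --- as you yourself concede in the final paragraph --- the proposal is in effect a restatement of the citation, which is exactly what the paper itself does, only more economically.
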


As mentioned in the introduction, only few constructions of quantum isomorphic, non-isomorphic graphs are known. In the next section, we obtain a new pair of quantum isomorphic, non-isomorphic graphs.

\section{A pair of quantum isomorphic strongly regular graphs}\label{secpairqiso}

In this section, we construct a new pair of quantum isomorphic, non-isomorphic graphs. Those graphs are additionally strongly regular with parameters $(120, 63, 30, 36)$, giving us a first pair of quantum isomorphic strongly regular graphs. Both graphs are known from the graph theory literature. We will first define the graphs, give some properties and alternative descriptions and then prove that they are quantum isomorphic. 

\subsection[Orthogonality graph]{The orthogonality graph of the lines in the $E_8$ root system}

\begin{definition}\label{defGE8}
The $E_8$ root system $\Psi_{E_8}$ consists of the following $240$ vectors in $\mathbb{R}^8$:
\begin{align}\label{rootsystemE8}
&\pm e_i \pm e_j \text{ for } 1 \leq i<j \leq 8,\qquad  x=(x_1,\dots,x_8) \text{ for } x_i \in \{\pm 1\} \text{ and } \prod_{i=1}^8 x_i=1.
\end{align}
Let $G_{E_8}=(V(G_{E_8}), E(G_{E_8}))$ be the orthogonality graph of the vectors in \eqref{rootsystemE8}, where we identify vectors $x$ and $-x$. This means that $G_{E_8}$ is the graph with $V(G_{E_8})=\{v_x \,|\, x \in \Psi_{E_8}, x \equiv-x\}$, $(v_x,v_y)\in E(G_{E_8})$ if and only if $\langle x, y\rangle=0$. This graph is a strongly regular with parameters $(120, 63, 30, 36)$, see for example \cite[Section 10.39]{BVM}. \end{definition}

Note that the graph $G_{E_8}$ is isomorphic to the graph $G_1$ appearing in \cite[Table 2.2]{MS}. We will now look at some automorphisms of $G_{E_8}$.
We denote by $I$, $X$, $Y$ and $Z$ the (real Pauli) matrices 
\begin{align}\label{Pauli}
I=\begin{pmatrix}1&0\\0&1\end{pmatrix},\quad
X=\begin{pmatrix}0&1\\1&0\end{pmatrix},\quad Z=\begin{pmatrix}1&0\\0&-1\end{pmatrix} \quad\text{and } Y=XZ.
\end{align}

\begin{lemma}\label{Lsubgroup}
Let $I$, $X$, $Y$ and $Z$ be as above. For each $M:=M_1\otimes M_2 \otimes M_3$ with $M_i \in \{I,X,Y,Z\}$ the maps $\sigma_M:V(G_{E_8})\to V(G_{E_8})$, $v_x \mapsto v_{Mx}$ are automorphisms of $G_{E_8}$. Those automorphisms give rise to a  subgroup $L \cong \Z_2^6$ of $\aut(G_{E_8})$.
\end{lemma}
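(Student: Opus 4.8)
The plan is to verify that each $\sigma_M$ is a well-defined graph automorphism and then to analyse the structure of the group they generate. I would organise everything around one structural observation: each of $I$, $X$, $Z$ is a \emph{signed permutation matrix} (a monomial matrix with entries in $\{0,\pm 1\}$), and $Y=XZ$ is one as well; since a tensor product of signed permutation matrices is again a signed permutation matrix, every $M=M_1\otimes M_2\otimes M_3$ is an orthogonal signed permutation matrix on $\mathbb{R}^8$. Orthogonality is the crucial point for the edge relation: because $\langle Mx,My\rangle=\langle x,y\rangle$, we have $\langle x,y\rangle=0\iff\langle Mx,My\rangle=0$, so adjacency is preserved in both directions as soon as we know $\sigma_M$ permutes the vertices.

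First I would check that $M$ maps $\Psi_{E_8}$ bijectively to itself, compatibly with the identification $x\equiv -x$ (which is automatic from linearity, since $M(-x)=-Mx$). A signed permutation preserves the number of nonzero coordinates, so it sends the roots $\pm e_i\pm e_j$ (two nonzero entries) to roots of the same shape and sends the all-$\pm 1$ vectors (eight nonzero entries) to all-$\pm 1$ vectors; the two families cannot mix. The only thing to verify is that the product condition $\prod_i x_i=1$ survives on the second family. Here the key computation is that the product of all eight nonzero entries of $M$ equals $\epsilon(M_1)^4\epsilon(M_2)^4\epsilon(M_3)^4$, where $\epsilon(M_i)$ is the product of the two nonzero entries of $M_i$; each factor is a fourth power of $\pm 1$ and hence equals $1$. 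Thus $M$ flips an even number of signs, the product condition is preserved, and together with orthogonality this shows $\sigma_M\in\aut(G_{E_8})$.

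For the group structure I would use that $\sigma_{MN}=\sigma_M\sigma_N$, so $M\mapsto\sigma_M$ is multiplicative. The $64$ tensor Paulis $M_1\otimes M_2\otimes M_3$ are linearly independent, hence pairwise distinct even up to sign; a product of two of them is again $\pm$ a tensor Pauli, and $\sigma_{-P}=\sigma_P$, so $L=\{\sigma_M\}$ is closed under composition and inverses and is a subgroup of $\aut(G_{E_8})$. Each generator satisfies $\sigma_M^2=\sigma_{M^2}=\sigma_{\pm I}=\mathrm{id}$, and any two commute because tensor Paulis commute up to a sign while $\sigma_{\pm P}=\sigma_P$. Hence $L$ is an elementary abelian $2$-group, and to conclude $L\cong\Z_2^6$ it remains only to show $|L|=64$, i.e.\ that $M\mapsto\sigma_M$ is injective on the tensor Paulis.

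This injectivity is the step I expect to be the main obstacle, and it reduces to the claim: if a signed permutation matrix $N$ satisfies $Nx\in\{x,-x\}$ for every root $x$, then $N=\pm I$. I would prove this by writing $Ne_i=s_i e_{\pi(i)}$ and testing on the type-$A$ roots. Evaluating $N(e_i+e_k)$ and using that a third coordinate is always available (as $\dim=8$) rules out $\pi(i)\neq i$, forcing $\pi=\mathrm{id}$; then comparing $N(e_i+e_j)$ with $\pm(e_i+e_j)$ forces $s_i=s_j$ for all pairs, so all signs coincide and $N=\pm I$. Applying this with $N=Q^{-1}P$ shows $\sigma_P=\sigma_Q$ implies $P=\pm Q$, hence $P=Q$; this yields $|L|=64$ and completes the identification $L\cong\Z_2^6$.
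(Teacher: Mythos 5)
Your proof is correct, and its overall skeleton matches the paper's: show that each $M=M_1\otimes M_2\otimes M_3$ is an orthogonal signed permutation matrix preserving $\Psi_{E_8}$, deduce that $\sigma_M$ preserves orthogonality and hence adjacency, then use $\sigma_M\sigma_N=\sigma_{MN}$ and $\sigma_{-P}=\sigma_P$ to get an elementary abelian $2$-group. The differences lie in how the two key checks are done. For well-definedness, the paper factors $M$ into the six explicit generators $X\otimes I\otimes I,\dots,I\otimes I\otimes Z$, displays them as block matrices, and reads off by inspection that any product is a signed permutation flipping an even number of signs; you instead compute directly that the product of the eight nonzero entries of $M$ equals $\epsilon(M_1)^4\epsilon(M_2)^4\epsilon(M_3)^4=1$, which is more self-contained and avoids the case inspection. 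The more substantial difference is at the end: the paper concludes $L\cong\Z_2^6$ by asserting that the six generators ``are independent'' without further argument, whereas you correctly identify this as the main obstacle and prove it, via the rigidity lemma that a signed permutation matrix $N$ with $Nx\in\{x,-x\}$ for every root must equal $\pm I$ (testing on $e_i+e_k$, with a third index available since the dimension exceeds two, forces the underlying permutation to be the identity, and then all signs to agree); combined with linear independence of the $64$ tensor Paulis this gives injectivity of $M\mapsto\sigma_M$ and hence $|L|=64$. This is a genuine addition that fills in the step the paper leaves implicit, and it works as stated. Two cosmetic quibbles: the roots $\pm e_i\pm e_j$ are of type $D_8$ rather than type $A$, and your order-two and commutativity observations in fact hold for all elements of $L$, not only the generators; neither affects correctness.
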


\begin{proof}
Let $M:=M_1\otimes M_2 \otimes M_3$ for some $M_i \in \{I,X,Y,Z\}$. We first have to check that $\sigma_M$ is well-defined. For this, we have to show that $Mx\in \Psi_{E_8}$ for every $x \in \Psi_{E_8}$. Note that any matrix $M$ of this form is a product of the following six matrices:

\begin{align}
&X\otimes I\otimes I=\begin{pmatrix}&&I&0\\&&0&I\\I&0&&\\0&I&&\end{pmatrix}, 
&&Z\otimes I\otimes I=\begin{pmatrix}I&&&\\&I&&\\&&-I&\\&&&-I\end{pmatrix},\nonumber\\
&I\otimes X\otimes I=\begin{pmatrix}0&I&&\\I&0&&\\&&0&I\\&&I&0\end{pmatrix},
&&I\otimes Z\otimes I=\begin{pmatrix}I&&&\\&-I&&\\&&I&\\&&&-I\end{pmatrix}
,\nonumber\\
&I\otimes I\otimes X=\begin{pmatrix}X&&&\\&X&&\\&&X&\\&&&X\end{pmatrix},
&&I\otimes I\otimes Z=\begin{pmatrix}Z&&&\\&Z&&\\&&Z&\\&&&Z\end{pmatrix}.\label{Paulitensor}
\end{align}
Thus, the matrix $M$ is a signed permutation matrix flipping an even number of signs. Looking at \eqref{rootsystemE8}, we see that such permutations map every $x\in\Psi_{E_8}$ to an element in $\Psi_{E_8}$. Therefore, $\sigma_M$ is well-defined.

Now, we show that every $\sigma_M$ is a graph automorphism. We have the following relations on $X,Y,Z$:
\begin{align*}
    X^2=I, X^*=X, Y^*=-Y, Y^2=-I, Z^2=I, Z^*=Z.
\end{align*}
Knowing that $M$ is a product of the matrices in \eqref{Paulitensor}, this yields $MM^*=M^*M=(I\otimes I\otimes I)$. From this, one deduces that $\sigma_M$ is a bijection. Since unitaries preserve inner products, we get $\langle x,y\rangle=0$ if and only if $\langle Mx,My \rangle=0$ and thus $(v_x,v_y)\in E(G_{E_8})$ if and only if $(v_{Mx}, v_{My})\in E(G_{E_8})$. This shows $\sigma_M\in \aut(G_{E_8})$.

It remains to show that we get a subgroup $L\cong \Z_2^6$ of the automorphism group. It is easy to see that $\sigma_M \sigma_N=\sigma_{MN}$. Also note that $\sigma_M=\sigma_{-M}$, since we identified $x$ and $-x$ in the vertex set of $G_{E_8}$. Thus, products and inverses are still of the form $\sigma_{N_1\otimes N_2\otimes N_3}$ for some $N_i \in \{I,X,Y,Z\}$. The group $L$ is generated by $\sigma_{X\otimes I\otimes I}, \sigma_{I\otimes X\otimes I},\sigma_{I\otimes I\otimes X},\sigma_{Z\otimes I\otimes I},\sigma_{I\otimes Z\otimes I},$ $\sigma_{I\otimes I\otimes Z}$. Since we have $\sigma_M=\sigma_{-M}$ and all matrices in \eqref{Paulitensor} either commute or anticommute, we get a commutative subgroup of $\aut(G_{E_8})$. Since we also have $\sigma_M^2=\mathrm{id}$ for all generators and they are independent, we obtain $L\cong \Z_2^6$.
\end{proof}

The next lemma shows that vertices in the same orbit of $L$ have the same stabilizer. 

\begin{lemma}\label{stabeq}
If there exists $\sigma_M\in L$ with $\sigma_M(v_x)=v_y$, then $\mathrm{Stab}_L(v_x)=\mathrm{Stab}_L(v_y)$.
\end{lemma}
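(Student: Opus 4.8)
The plan is to recognize this as an instance of the standard orbit--stabilizer principle: points in the same orbit of a group action always have \emph{conjugate} stabilizers, and then to exploit the fact, established in Lemma~\ref{Lsubgroup}, that $L\cong\Z_2^6$ is abelian. For an abelian group conjugation is trivial, so conjugate stabilizers coincide, which is exactly the claim.

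First I would unwind the definitions. Take an arbitrary $\sigma_N\in\mathrm{Stab}_L(v_x)$, so that $\sigma_N(v_x)=v_x$, and use the hypothesis $\sigma_M(v_x)=v_y$ to compute
\begin{align*}
(\sigma_M\sigma_N\sigma_M^{-1})(v_y)=(\sigma_M\sigma_N\sigma_M^{-1})(\sigma_M(v_x))=\sigma_M\sigma_N(v_x)=\sigma_M(v_x)=v_y.
\end{align*}
This shows $\sigma_M\sigma_N\sigma_M^{-1}\in\mathrm{Stab}_L(v_y)$, hence $\sigma_M\,\mathrm{Stab}_L(v_x)\,\sigma_M^{-1}\subseteq\mathrm{Stab}_L(v_y)$. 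Applying the identical argument with $\sigma_M^{-1}$ in place of $\sigma_M$ (noting $\sigma_M^{-1}(v_y)=v_x$) gives the reverse inclusion, so in fact $\mathrm{Stab}_L(v_y)=\sigma_M\,\mathrm{Stab}_L(v_x)\,\sigma_M^{-1}$. All of these are genuinely subgroups of the single group $L$, so the conjugation is taking place inside $L$ and the equality is meaningful.

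Finally I would invoke commutativity. Since $L$ is abelian, conjugation by $\sigma_M$ is the identity automorphism of $L$, so $\sigma_M\,\mathrm{Stab}_L(v_x)\,\sigma_M^{-1}=\mathrm{Stab}_L(v_x)$, and therefore $\mathrm{Stab}_L(v_x)=\mathrm{Stab}_L(v_y)$, as required.

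There is no substantial obstacle here: the content is entirely the elementary orbit--stabilizer computation above, and the only external input is the commutativity of $L$, which is already proved in Lemma~\ref{Lsubgroup}. The one point worth stating carefully is precisely that step --- that the conjugate subgroup equals the original one only because $L$ is abelian --- since this is where the structure of $L$ (rather than a generic subgroup of $\aut(G_{E_8})$) is actually used.
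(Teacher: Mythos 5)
Your proof is correct, and it reaches the conclusion by a cleaner, more abstract route than the paper's. The paper's own proof is a hands-on matrix computation: for $\sigma_N\in\mathrm{Stab}_L(v_x)$ it writes $x=\pm Nx$, multiplies on the left by $M$, and uses the fact that the tensor-Pauli matrices $M$ and $N$ commute or anticommute to conclude $y = Mx=\pm NMx=\pm Ny$, i.e.\ $\sigma_N\in\mathrm{Stab}_L(v_y)$, the signs being harmless because vertices are lines ($v_z=v_{-z}$). In effect, the paper re-derives inside this proof the same commute-up-to-sign mechanism that made $L$ abelian in Lemma \ref{Lsubgroup}. You instead factor the argument through that lemma: stabilizers of points in the same orbit are conjugate (the standard computation you carry out), and conjugation in an abelian group is trivial. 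Both proofs rest on exactly the same underlying fact --- commutativity of $L$ --- but you use it as a black box, while the paper repeats the matrix-level verification. What your version buys is brevity and generality: it works verbatim for any action of any abelian group, with no reference to the Pauli structure. What the paper's version buys is self-containedness and explicitness about the sign subtlety, namely that $M$ and $N$ need only commute up to sign, which suffices precisely because the vertices are projective points; that is the structural reason $L$ is abelian in the first place.
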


\begin{proof}
Let $\sigma_N \in \mathrm{Stab}_L(v_x)$. Then, we know $v_x=v_{Nx}$ and thus $x =\pm Nx$. Multiplying by $M$ from the left yields $Mx=\pm MNx$. Since $M$ and $N$ either commute or anti-commute, we get $Mx=\pm NMx$. This shows $y=\pm Ny$ and therefore $\sigma_N \in \mathrm{Stab}_L(v_y)$. Thus $\mathrm{Stab}_L(v_x)\subseteq \mathrm{Stab}_L(v_y)$. One similarly shows the other inclusion and we get $\mathrm{Stab}_L(v_x)= \mathrm{Stab}_L(v_y)$.
\end{proof}

Using the action of the subgroup $L\subseteq \aut(G_{E_8})$, we get a partition of the vertex set of $G_{E_8}$ into $15$ orbits. 

\begin{lemma}
The action of $L$ on $V(G_{E_8})$ yields $15$ orbits of size $8$. Those partition the vertex set into $15$ cliques of size $8$. 
\end{lemma}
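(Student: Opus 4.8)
The plan is to combine the orbit--stabiliser theorem with the observation, made in the proof of Lemma~\ref{Lsubgroup}, that every $M=M_1\x M_2\x M_3$ is a signed permutation matrix flipping an even number of signs. Since signed permutations preserve the number of nonzero coordinates of a vector, the action of $L$ respects the two families of roots in~\eqref{rootsystemE8}: the $56$ lines $v_{\pm e_i\pm e_j}$ (``type~1'') and the $64$ lines $v_x$ with $x\in\{\pm1\}^8$ (``type~2''). I would analyse the two families separately and, in each case, exhibit the orbits explicitly. Because the orbit of $v_x$ has size $|L|/|\mathrm{Stab}_L(v_x)|=64/|\mathrm{Stab}_L(v_x)|$ and because mutually orthogonal lines in $\mathbb{R}^8$ number at most $8$, it suffices to identify each orbit as a set of exactly $8$ pairwise orthogonal lines; the count $120/8=15$ then follows at once.

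For type~1 I would label the standard basis by $\{0,1\}^3$ using the diagonal generators in~\eqref{Paulitensor}, so that $Z\x I\x I$, $I\x Z\x I$, $I\x I\x Z$ record the three bits of the index. A line $v_{e_i\pm e_j}$ then carries the invariant $d:=i\oplus j\in\{0,1\}^3\setminus\{0\}$. The permutation generators $X\x I\x I$, $I\x X\x I$, $I\x I\x X$ flip one bit of every index and the diagonal generators only change signs, so all of them preserve $d$; conversely the bit-flips act transitively on the four index pairs with a fixed $d$, and a $Z$ acting on a coordinate where $i$ and $j$ differ sends $e_i+e_j$ to $e_i-e_j$. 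Hence for each of the $7$ values of $d$ the eight lines $v_{e_i\pm e_{i\oplus d}}$ form a single orbit, and any two of them either have disjoint support or are of the form $e_i+e_j,\ e_i-e_j$, hence are orthogonal. This gives $7$ cliques of size~$8$.

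For type~2 I would encode $x\in\{\pm1\}^8$ by the function $f\colon\{0,1\}^3\to\{0,1\}$ with $x_i=(-1)^{f(i)}$. The constraint $\prod_i x_i=1$ says exactly that $f$ has even weight, which---comparing the weights of the monomials, the only odd one being $i_1i_2i_3$---is equivalent to $\deg f\le 2$. In this language $Z\x I\x I$, $I\x Z\x I$, $I\x I\x Z$ add the linear forms $i_1,i_2,i_3$ to $f$, the bit-flips precompose $f$ with a translation $i\mapsto i\oplus a$, and the identification $x\equiv-x$ adds the constant $1$. All of these fix the quadratic part $b=(b_{12},b_{13},b_{23})$ of $f$, while the $Z$-generators together with the global sign already realise every affine modification of $f$, and translations only reproduce such modifications. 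Consequently the orbit of $f$ is precisely its coset modulo affine functions, namely the $16$ functions with quadratic part $b$, i.e.\ $8$ lines. The value $b=0$ gives the affine functions, that is the eight lines through the vectors $\big((-1)^{\ell(i)}\big)_i$ with $\ell$ linear; the $7$ nonzero values of $b$ give $7$ further orbits. Two distinct lines in such an orbit come from functions differing by an affine function with \emph{nonzero} linear part $\ell$, whence their inner product is $\sum_i(-1)^{\ell(i)+c}=0$. So each of these $8$ orbits is again a clique of size~$8$.

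Combining the two families yields $7+8=15$ orbits of size~$8$, each a clique, as claimed. I expect type~2 to be the main obstacle: a general type~2 root is not a product vector, so the inner products $\langle x,Mx\rangle$ do not factor over the three tensor legs and orthogonality cannot be read off directly---it is the Boolean-function reformulation that makes both the orbit count and the clique property transparent. The single delicate point is to confirm that the translations contribute nothing beyond the affine changes already produced by the diagonal generators and the global sign, so that each fixed quadratic part is one orbit and not a union of several; Lemma~\ref{stabeq}, which forces all vertices of an orbit to share a common (here order-$8$) stabiliser, serves as a useful consistency check.
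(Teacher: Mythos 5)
Your proposal is correct, but it proves the lemma by a genuinely different route than the paper. The paper's proof is an explicit enumeration: it simply lists all fifteen orbits (the display \eqref{vertexpartition}) as the outcome of a ``straightforward computation'' and then observes by inspection that the eight lines in each orbit are mutually orthogonal, hence form cliques. You instead derive the orbit structure conceptually: you split the $120$ lines into the two families of \eqref{rootsystemE8} (which $L$ preserves, since its elements are signed permutations), and in each family you exhibit a complete orbit invariant --- the XOR $d=i\oplus j$ of the binary-labelled indices for the lines $v_{e_i\pm e_j}$, and the quadratic part of the Boolean function $f$ with $x_b=(-1)^{f(b)}$ for the $\pm1$-vectors --- checking that the generators preserve the invariant and act transitively on the lines sharing a given value of it. Orthogonality within an orbit then falls out abstractly: disjoint supports or pairs $e_i+e_j$, $e_i-e_j$ in the first family, and balancedness of nonzero linear forms on $\F_2^3$ in the second. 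Your observation that the evenness constraint $\prod_i x_i=1$ is exactly $\deg f\le 2$, so that the second family's orbit space is parametrized by the $8$ possible quadratic parts, is a clean structural fact that the paper's proof does not contain, and it makes the $7+8=15$ count transparent; you also correctly isolate and settle the one delicate point, namely that translations $i\mapsto i\oplus a$ only change $f$ by affine functions, so each quadratic part gives a single orbit rather than several. What the paper's enumeration buys, on the other hand, is reusability: the explicit list \eqref{vertexpartition} and the stabilizer table \eqref{Stabs} extracted from it are needed later anyway (in Lemmas \ref{qpermmatrix}, \ref{edgepermutation}, \ref{prod0} and in Theorem \ref{isographs}), so the concrete orbit representatives are not wasted effort there, whereas your argument proves exactly the stated lemma with no enumeration.
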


\begin{proof}
Let $S\subseteq \{1,\dots, 8\}$. We denote by $x_S\in \mathbb{R}^8$ the vector with $(x_S)_i=-1$ if $i\in S$ and $(x_S)_i=1$ otherwise. By straightforward computation, we have the following orbits under $L$, where we just list the vectors $x$ associated to the vertices $v_x$ in the graph $G_{E_8}$:
\begin{align}
   V_1:\quad &e_1 \pm e_2, e_3 \pm e_4, e_5 \pm e_6, e_7 \pm e_8,\nonumber\\
   V_2:\quad &e_1 \pm e_3, e_2 \pm e_4, e_5 \pm e_7, e_6 \pm e_8,\nonumber\\
   V_3:\quad &e_1 \pm e_4, e_2 \pm e_3, e_5 \pm e_8, e_6 \pm e_7,\nonumber\\
   V_4:\quad &e_1 \pm e_5, e_2 \pm e_6, e_3 \pm e_7, e_4 \pm e_8,\nonumber\\
   V_5:\quad &e_1 \pm e_6, e_2 \pm e_5, e_3 \pm e_8, e_4 \pm e_7,\nonumber\\
   V_6:\quad &e_1 \pm e_7, e_2 \pm e_8, e_3 \pm e_5, e_4 \pm e_6,\nonumber\\
   V_7:\quad &e_1 \pm e_8, e_2 \pm e_7, e_3 \pm e_6, e_4 \pm e_5,\nonumber\\
   V_8:\quad &x_{\{1,2\}}, x_{\{3,4\}},x_{\{5,6\}},x_{\{7,8\}},x_{\{1,4,6,8\}}, x_{\{2,3,6,8\}},x_{\{2,4,5,8\}},x_{\{2,4,6,7\}},\nonumber\\
   V_9:\quad  &x_{\{1,3\}}, x_{\{2,4\}},x_{\{5,7\}},x_{\{6,8\}},x_{\{1,4,7,8\}}, x_{\{1,4,5,6\}},x_{\{1,2,6,7\}},x_{\{1,2,5,8\}},\nonumber\\
   V_{10}:\quad  &x_{\{1,4\}}, x_{\{2,3\}},x_{\{5,8\}},x_{\{6,7\}},x_{\{1,3,7,8\}}, x_{\{1,3,5,6\}},x_{\{1,2,5,7\}},x_{\{1,2,6,8\}},\nonumber\\
   V_{11}:\quad  &x_{\{1,5\}}, x_{\{2,6\}},x_{\{3,7\}},x_{\{4,8\}},x_{\{1,6,7,8\}}, x_{\{2,5,7,8\}},x_{\{4,5,6,7\}},x_{\{1,2,4,7\}},\nonumber\\
   V_{12}:\quad  &x_{\{1,6\}}, x_{\{2,5\}},x_{\{3,8\}},x_{\{4,7\}},x_{\{1,5,7,8\}}, x_{\{2,6,7,8\}},x_{\{3,5,6,7\}},x_{\{4,5,6,8\}},\nonumber\\
   V_{13}:\quad  &x_{\{1,7\}}, x_{\{2,8\}},x_{\{3,5\}},x_{\{4,6\}},x_{\{1,5,6,8\}}, x_{\{3,6,7,8\}},x_{\{2,5,6,7\}},x_{\{4,5,7,8\}},\nonumber\\
   V_{14}:\quad  &x_{\{1,8\}}, x_{\{2,7\}},x_{\{3,6\}},x_{\{4,5\}},x_{\{1,5,6,7\}}, x_{\{4,6,7,8\}},x_{\{2,5,6,8\}},x_{\{3,5,7,8\}},\nonumber\\
   V_{15}:\quad  &x_\emptyset, x_{\{5,6,7,8\}},x_{\{3,4,7,8\}},x_{\{2,4,6,8\}},x_{\{3,4,5,6\}}, x_{\{2,4,5,7\}},x_{\{2,3,6,7\}},x_{\{2,3,5,8\}}.\label{vertexpartition}
\end{align}
We see that the vectors in the orbits form a basis of $\mathbb{R}^8$, thus they partition $V(G_{E_8})$ into $15$ cliques of size $8$.
\end{proof}

We have the following stabilizer subgroups for all vertices in the corresponding orbits:
\begin{align}
    &V_1:\,\,\, \langle \sigma_{IIX}, \sigma_{IZI}, \sigma_{ZII}\rangle, &&V_2:\,\,\,\langle \sigma_{ZII}, \sigma_{IXI}, \sigma_{IIZ}\rangle, &&V_3:\,\,\,\langle \sigma_{ZII}, \sigma_{IXX}, \sigma_{IZZ}\rangle,\nonumber\\
    &V_4:\,\,\,\langle \sigma_{XII}, \sigma_{IIZ}, \sigma_{IZI}\rangle, &&V_5:\,\,\,\langle \sigma_{XIX}, \sigma_{IZI}, \sigma_{ZIZ}\rangle, &&V_6:\,\,\,\langle \sigma_{XXI}, \sigma_{IIZ}, \sigma_{ZZI}\rangle,\nonumber\\
    &V_7:\,\,\,\langle \sigma_{XXX}, \sigma_{ZZI}, \sigma_{IZZ}\rangle, &&V_8:\,\,\,\langle \sigma_{IIX}, \sigma_{ZXI}, \sigma_{XZI}\rangle, &&V_9:\,\,\,\langle \sigma_{IXI}, \sigma_{ZIX}, \sigma_{XIZ}\rangle,\nonumber\\
    &V_{10}:\langle \sigma_{IXX}, \sigma_{ZXI}, \sigma_{XZZ}\rangle, &&V_{11}:\langle \sigma_{XII}, \sigma_{IXZ}, \sigma_{IZX}\rangle, &&V_{12}:\langle \sigma_{XIX}, \sigma_{IZX}, \sigma_{ZXZ}\rangle,\nonumber\\
    &V_{13}:\langle \sigma_{XXI}, \sigma_{IXZ}, \sigma_{ZZX}\rangle, &&V_{14}:\langle \sigma_{XXX}, \sigma_{ZZX}, \sigma_{XZZ}\rangle, &&V_{15}:\langle \sigma_{XII}, \sigma_{IXI}, \sigma_{IIX}\rangle.\label{Stabs}
\end{align}
In this case, we use the notation $M_1M_2M_3:=M_1 \otimes M_2 \otimes M_3$, where $M_i\in \{I,X,Y,Z\}$.
Note that the rank-$1$ projections associated to the vectors can be written in the form 
\begin{align*}
    \frac{1}{\|x\|^2}xx^*=\frac{1}{8}(1\pm N_1)(1\pm N_2)(1\pm N_3),
\end{align*}
where $N_i, i=1,2,3$ are the matrices associated to the generators of the stabilizer subgroup. For example, we have
\begin{align*}
    \frac{1}{2}(e_1+e_2)(e_1+e_2)^*=\frac{1}{8}(1+IIX)(1+IZI)(1+ZII).
\end{align*}
Using those rank-$1$ projections, we will now define a quantum permutation matrix on the vertex set of $G_{E_8}$. 

\begin{lemma}\label{qpermmatrix}
Let $P_x=\frac{1}{\|x\|^2}xx^*$ be the associated rank-$1$ projection to every vector $x \in \Psi_{E_8}$ in the $E_8$ root system. We partition the vertex set of $G_{E_8}$ as in \eqref{vertexpartition}. For each $i \in \{1,\dots, 15\}$, we choose a vector $w_i$ associated to one of the vertices in $V_i$. For $v_y, v_z \in V_i$, we define $u_{v_yv_z}^{(i)}:=M_{yz}P_{w_i}M_{yz}^*=P_{M_{yz}w_i}$, where $M_{yz}=M_1\otimes M_2 \otimes M_3$,  $M_i\in \{I,X,Y,Z\}$ fulfils $M_{yz}y=\pm z$ (such an $M$ exists as $v_y$ and $v_z$ are in the same orbit under the action of $L$). Let $u^{(i)}=(u_{v_yv_z}^{(i)})_{v_y,v_z \in V_i}$ and let $u$ be the matrix 
\begin{align}
\begin{blockarray}{cccccc}
&V_1&V_2&\dots&\dots&V_{15}\\
\begin{block}{c(ccccc)}
V_1&u^{(1)}&0&0&\dots&0\\ V_2&0&u^{(2)}&0&\dots&0\\ \vdots&0&0&u^{(3)}&\dots&0\\\vdots&\vdots &\vdots&\vdots&\ddots&0\\V_{15}&0&0&0&0&u^{(15)} \\
\end{block}
\end{blockarray}\label{blockmatrix}
\end{align}
Then $u$ is a quantum permutation matrix.
\end{lemma}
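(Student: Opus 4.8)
The plan is to verify the two defining properties of a magic unitary for the block matrix $u$ of \eqref{blockmatrix}, namely that every entry is a projection and that all row and column sums equal $1_{M_8(\C)}=I$. Since $u$ is block diagonal and the off-diagonal blocks are zero, it suffices to treat each block $u^{(i)}$ separately: $u^{(i)}$ is an $8\times 8$ matrix over $M_8(\C)$ indexed by the vertices of the orbit $V_i$, and I will show it is itself a magic unitary.

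Before that I would record two preliminary facts. First, the entries are well defined: if $M$ and $M'$ are Pauli tensors with $My=\pm z$ and $M'y=\pm z$, then $M^{-1}M'$ is again a Pauli tensor up to sign (the matrices in \eqref{Paulitensor} pairwise commute or anticommute and satisfy $MM^*=I$) and $M^{-1}M'y=\pm y$, so $\sigma_{M^{-1}M'}$ fixes $v_y$ and lies in $\mathrm{Stab}_L(v_y)=\mathrm{Stab}_L(v_{w_i})$ by Lemma \ref{stabeq}; hence it fixes $v_{w_i}$, giving $M'w_i=\pm Mw_i$ and $P_{M'w_i}=P_{Mw_i}$. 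Second, each entry is a genuine rank-$1$ projection: from $M_{yz}M_{yz}^*=I$ one gets $M_{yz}P_{w_i}M_{yz}^*=P_{M_{yz}w_i}$, and conjugating a self-adjoint idempotent by a unitary preserves both properties. This settles the projection requirement.

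The key structural observation is that each entry is actually one of the eight coordinate projections onto the orthogonal basis $V_i$. Indeed $\sigma_{M_{yz}}\in L$ and $V_i$ is an $L$-orbit, so $M_{yz}$ permutes $V_i$ up to sign; in particular $M_{yz}w_i=\pm w$ for some $w\in V_i$, so $u^{(i)}_{v_yv_z}=P_w$. Since the vectors of $V_i$ form an orthogonal basis of $\mathbb{R}^8$, the family $\{P_w : w\in V_i\}$ consists of eight pairwise orthogonal rank-$1$ projections with $\sum_{w\in V_i}P_w=I$. It therefore remains only to show that in each row and each column of $u^{(i)}$ the eight entries are pairwise distinct; by counting they then realise all eight projections exactly once, and every row and column sum equals $I$.

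This distinctness is where Lemma \ref{stabeq} does the real work, and I expect it to be the main obstacle because of the sign bookkeeping. For a fixed row $v_y$, if $P_{M_{yz_1}w_i}=P_{M_{yz_2}w_i}$ then $M_{yz_2}^{-1}M_{yz_1}$ fixes $v_{w_i}$, hence lies in $\mathrm{Stab}_L(v_{w_i})=\mathrm{Stab}_L(v_y)$ and fixes $v_y$, forcing $M_{yz_1}y=\pm M_{yz_2}y$, i.e. $z_1=\pm z_2$ and $v_{z_1}=v_{z_2}$. For a fixed column $v_z$ one argues symmetrically: from $P_{M_{y_1z}w_i}=P_{M_{y_2z}w_i}$ one gets $M_{y_1z}^{-1}M_{y_2z}\in\mathrm{Stab}_L(v_{w_i})=\mathrm{Stab}_L(v_z)=\mathrm{Stab}_L(v_{y_2})$, so this element fixes $v_{y_2}$; combining this with $M_{y_1z}y_1=\pm z=\pm M_{y_2z}y_2$ yields $M_{y_1z}y_2=\pm M_{y_1z}y_1$ and hence $y_1=\pm y_2$. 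The only delicate point throughout is that moving an element of $L$ past a Pauli tensor changes only an overall sign, which is invisible to the rank-$1$ projections; assembling these distinctness statements with the structural observation then shows that $u^{(i)}$, and therefore $u$, is a quantum permutation matrix.
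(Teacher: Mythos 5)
Your proof is correct and takes essentially the same route as the paper's: the entries are projections since they are unitary conjugates of $P_{w_i}$, and each row and column sums to $1_{M_8(\C)}$ because its eight entries are pairwise distinct rank-$1$ projections onto the orthogonal basis formed by the vectors of $V_i$, with distinctness obtained from the stabilizer identification in Lemma \ref{stabeq} exactly as in the paper. Your two additions --- checking that the entries are well defined independently of the choice of $M_{yz}$, and writing out the column case explicitly rather than citing symmetry --- fill in details the paper leaves implicit, but do not change the argument.
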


\begin{proof}
The entries of $u$ are projections by definition. It remains to show that $\sum_{v_y\in V_i}u_{v_yv_z}^{(i)}=1_{M_8(\C)}$ and $\sum_{v_z\in V_i}u_{v_yv_z}^{(i)}=1_{M_8(\C)}$ for all $i$. Since $M_{yz}^2=\pm 1_{M_8(\C)}$, we see that we also have $M_{yz}z=\pm y$. For $v_y, v_{z_1}, v_{z_2}\in V_i$ with $z_1\neq z_2$, we thus have $M_{yz_2}M_{yz_1}z_1=\pm z_2$. It follows $\sigma_{M_{yz_2}M_{yz_1}}\notin \mathrm{Stab}_L(v_{z_1})=\mathrm{Stab}_L(v_{w_i})$, where the equality of the stabilizers comes from Lemma \ref{stabeq}. We get $M_{yz_1}w_i\neq \pm M_{yz_2}w_i$ for all $v_{z_1}\neq v_{z_2} \in V_i$. Recall that we have $u_{v_yv_z}^{(i)}=P_{M_{yz}w_i}$. We deduce
\begin{align*}
    \sum_{v_z\in V_i}u_{v_yv_z}^{(i)}=\sum_{v_z\in V_i}P_{M_{yz}w_i}=1_{M_8(\C)},
\end{align*}
since we sum over all rank-$1$ projections of the vectors associated to $V_i$, which form a basis of $\mathbb{R}_8$ (see \eqref{vertexpartition}). One similarly shows $\sum_{v_y\in V_i}u_{v_yv_z}^{(i)}=1_{M_8(\C)}$.
\end{proof}

For the next lemma, recall that $d(i,j)$ denotes the distance between vertices $i,j$ as defined in Section \ref{secprelim}. 

\begin{lemma}\label{edgepermutation}
Partition the vertex set of $G_{E_8}$ as in \eqref{vertexpartition}. Let $k,s \in V_i$ and $l,t \in V_j$ for $i\neq j$. If $d(k,l)=d(s,t)$, then there exists $\sigma \in L$ such that $\sigma(k)=s$, $\sigma(l)=t$.
\end{lemma}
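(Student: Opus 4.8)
The plan is to reduce the statement to a transitivity property of a single point stabiliser, and then to read off the orbit structure from the strong regularity of $G_{E_8}$.

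First I would exploit that $V_i$ is, by construction, an orbit of $L$, so there is $\sigma_1 \in L$ with $\sigma_1(k)=s$. Since $\sigma_1$ is an automorphism that maps each $L$-orbit to itself, we have $\sigma_1(l)\in V_j$ and, as automorphisms preserve distance, $d(s,\sigma_1(l))=d(\sigma_1(k),\sigma_1(l))=d(k,l)=d(s,t)$. Hence it suffices to find $\tau\in H:=\mathrm{Stab}_L(s)$ with $\tau(\sigma_1(l))=t$, because then $\sigma:=\tau\sigma_1$ satisfies $\sigma(k)=\tau(s)=s$ and $\sigma(l)=t$. Writing $l':=\sigma_1(l)$, the whole task becomes: any two vertices $l',t\in V_j$ (both distinct from $s$) with $d(s,l')=d(s,t)$ lie in a common $H$-orbit. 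In other words, I want the $H$-orbits on $V_j$ to be exactly the classes "distance from $s$''.

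Next I would analyse the $H$-action on $V_j$. By Lemma \ref{stabeq} the stabiliser $\mathrm{Stab}_L(v)$ is the same group $H_j$ for every $v\in V_j$, so the stabiliser in $H$ of any point of $V_j$ equals $H\cap H_j$, independently of the point. Consequently all $H$-orbits on $V_j$ have the common size $|H|/|H\cap H_j|=8/|H\cap H_j|$, and the number of orbits equals $|H\cap H_j|$. Reading the generators of $H=H_i$ and of $H_j$ off \eqref{Stabs}, a direct check shows that each intersection $H_i\cap H_j$ has order exactly $2$ for $i\neq j$ (equivalently, the two three-dimensional subgroups of $L\cong\Z_2^6$ meet in a line). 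Thus $H$ has exactly two orbits on $V_j$, each of size $4$. To match these with distances I would use that $G_{E_8}$ is connected and strongly regular with $\mu=36>0$, hence of diameter $2$, so for $l',t\neq s$ the distances lie in $\{1,2\}$ and the two distance classes are the neighbours and the non-neighbours of $s$ inside $V_j$. Since $V_j$ is a clique of size $8=1-63/(-9)$ meeting the Hoffman bound (the eigenvalues of $G_{E_8}$ are $63,3,-9$), the partition $\{V_j,\,V(G_{E_8})\setminus V_j\}$ is equitable, and a short edge count ($8\cdot 56$ edges leaving $V_j$, spread over $112$ outside vertices) gives that every external vertex, in particular $s$, has exactly $4$ neighbours in $V_j$. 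So both distance classes are $H$-invariant sets of size $4$; being unions of the size-$4$ orbits, each is a single $H$-orbit. Therefore the two $H$-orbits coincide with the two distance classes, and $d(s,l')=d(s,t)$ forces $l'$ and $t$ into the same orbit, producing the desired $\tau$.

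The load-bearing step is the group-theoretic input in the second part, namely that each pair of stabiliser subgroups in \eqref{Stabs} meets in a subgroup of order exactly $2$. If any intersection were larger, $H$ would have more than two orbits on $V_j$ while only two distances occur, and the conclusion would genuinely fail; conversely the Hoffman count already shows the intersection cannot be trivial, since then $H$ would be transitive on $V_j$ and $s$ would have $0$ or $8$ neighbours there rather than $4$. I expect to establish the order-$2$ fact either by the finite verification over the fifteen subgroups listed in \eqref{Stabs}, or by invoking a symmetry of the configuration acting transitively on ordered pairs of distinct orbits so as to collapse the $\binom{15}{2}$ pairs to a single representative computation. The strong-regularity count is then routine once the Hoffman-bound equality is observed.
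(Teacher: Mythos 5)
Your proof is correct and follows essentially the same route as the paper: reduce via transitivity of $L$ on $V_i$ to the action of $\mathrm{Stab}_L(s)$ on $V_j$, use the order-$2$ intersection of stabilizer subgroups read off from \eqref{Stabs} to get orbits of size $4$, and match these orbits with the four neighbours and four non-neighbours of $s$ in $V_j$. The only difference is cosmetic: you justify the ``four neighbours in $V_j$'' fact via the Hoffman clique bound and equitability of the partition, whereas the paper simply asserts it.
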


\begin{proof}
Since $k,s \in V_i$, they are in the same orbit under the action of $L$ by definition. Thus, there exists $\sigma_1\in L$ such that $\sigma_1(k)=s$, $\sigma_1(l)=t'$, where $d(k,l)=d(s,t')$. 

Let $\mathrm{Stab}_L(s)\cap \mathrm{Stab}_L(t')=\{\mathrm{id}, \tau\}$ (we know $|\mathrm{Stab}_L(s)\cap \mathrm{Stab}_L(t')|=2$, see \eqref{Stabs}). Assume $\sigma_2, \sigma_3 \in \mathrm{Stab}_L(s)$ such that $\sigma_2(t')=\sigma_3(t')$. Then $\sigma_2\circ \sigma_3\in \mathrm{Stab}_L(s)\cap \mathrm{Stab}_L(t')$ since $\sigma_2\circ \sigma_3(t')=\sigma_2\circ \sigma_2(t')=t'$ as $\sigma_2$ has order $2$. Therefore, we either have $\sigma_2\circ \sigma_3=\mathrm{id}$ or $\sigma_2\circ \sigma_3=\tau$. This shows $\sigma_3=\sigma_2$ or $\sigma_3=\sigma_2\circ \tau$. Since $|\mathrm{Stab}_L(s)/(\mathrm{Stab}_L(s)\cap \mathrm{Stab}_L(t'))|=4$, we see that elements in $\mathrm{Stab}_L(s)$ map $t'$ to four other vertices $q$ in $V_j$ fulfilling $d(s,t')=d(s,q)$. Note that $s$ has four neighbors and four non-neighbors in $V_j$. Thus, there exists $\sigma_4\in \mathrm{Stab}_L(s)$ such that $\sigma_4(t')=t$. Therefore, $\sigma=\sigma_4 \circ \sigma_1$ fulfills $\sigma(k)=s$ and $\sigma(l)=t$.
\end{proof}

We will now look at products of entries in the previously defined quantum permutation matrix. The lemma inspired us to give the alternative description (Definition \ref{defqisograph}) of the graph we will discuss in the next section. 

\begin{lemma}\label{prod0}
Let $u$ be as in Lemma \ref{qpermmatrix} and let $k,s \in V_i$ and $l,t\in V_j$, $i \neq j$. 
\begin{itemize}
    \item[(i)] For $d(k,l)=d(s,t)$, we have $u_{ks}^{(i)}u_{lt}^{(j)}=0$ if and only if $\langle w_i,w_j\rangle=0$.
    \item[(ii)] For $d(k,l)\neq d(s,t)$, we have $u_{ks}^{(i)}u_{lt}^{(j)}=0$ if and only if $\langle w_i,w_j\rangle\neq0$.
\end{itemize}
\end{lemma}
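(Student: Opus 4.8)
The plan is to reduce the vanishing of $u_{ks}^{(i)}u_{lt}^{(j)}$ to a single orthogonality condition and then evaluate that condition separately in the two cases. Recall from Lemma \ref{qpermmatrix} that $u_{ks}^{(i)}=P_{M_{ks}w_i}$ and $u_{lt}^{(j)}=P_{M_{lt}w_j}$ are rank-$1$ projections onto real vectors, and for any vectors $a,b$ one has $P_aP_b=\langle a,b\rangle\, ab^*/(\|a\|^2\|b\|^2)$. Hence the product vanishes if and only if $\langle M_{ks}w_i,\,M_{lt}w_j\rangle=0$, i.e.\ if and only if the vertices $v_{M_{ks}w_i}\in V_i$ and $v_{M_{lt}w_j}\in V_j$ are adjacent in $G_{E_8}$. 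I note here that $v_{M_{ks}w_i}$ is well defined: two admissible choices of $M_{ks}$ differ by a stabilizer element of $v_{w_i}$ by Lemma \ref{stabeq}, so they give the same projection. Everything below is phrased through this orthogonality.

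For part (i), where $d(k,l)=d(s,t)$, I would apply Lemma \ref{edgepermutation} to get a single $M=N_1\otimes N_2\otimes N_3\in L$ with $Mk=\pm s$ and $Ml=\pm t$. By the well-definedness just noted I may then take $M_{ks}=M_{lt}=M$. Since $M$ is real orthogonal ($MM^*=1_{M_8(\C)}$, as in the proof of Lemma \ref{Lsubgroup}), I get $\langle M_{ks}w_i,M_{lt}w_j\rangle=\langle Mw_i,Mw_j\rangle=\langle w_i,w_j\rangle$, so the product vanishes exactly when $\langle w_i,w_j\rangle=0$. This settles (i).

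For part (ii), $d(k,l)\neq d(s,t)$, Lemma \ref{edgepermutation} cannot be used — no $\sigma\in L$ matches both distances, since automorphisms preserve distance — so a new idea is required. I would transport by the order-$2$ automorphism $\sigma_{M_{ks}}$. Applying it to both $v_{M_{ks}w_i}$ and $v_{M_{lt}w_j}$ and using $M_{ks}^2=\pm1_{M_8(\C)}$ together with commutativity of $L$, adjacency of these two vertices is equivalent to adjacency of $v_{w_i}$ and $v_{Nw_j}$, where $N:=M_{ks}M_{lt}\in L$. Thus the product vanishes iff $\langle w_i,Nw_j\rangle=0$, and the content of (ii) is that $N$ \emph{reverses} the orthogonality of $w_j$ relative to $w_i$ exactly in the distance-mismatch case. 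To localize this I would also reduce to (i): choosing $t'\in V_j$ with $d(s,t')=d(k,l)$ (possible since $s$ has four neighbors and four non-neighbors in $V_j$, as recorded in the proof of Lemma \ref{edgepermutation}) gives, by part (i), $u_{ks}^{(i)}u_{lt'}^{(j)}=0\iff w_i\perp w_j$; writing $M_{lt'}w_j=\pm R\,M_{lt}w_j$ for the element $R\in L$ with $Rt=\pm t'$, part (ii) becomes the single statement that $\sigma_R$ flips the orthogonality of $M_{lt}w_j$ to the fixed vector $M_{ks}w_i$.

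The hard part is establishing this flip, and I would do it with the stabilizer description recorded before Lemma \ref{qpermmatrix}, namely $P_w=\tfrac18\prod_{m=1}^3(1+\varepsilon_m N_m^{w})$, where $N_1^w,N_2^w,N_3^w$ generate $\mathrm{Stab}_L(v_w)$ and $\varepsilon_m=\pm1$. Conjugating by $N$ flips $\varepsilon_m$ precisely when $N$ anticommutes with $N_m^{w_j}$, so $P_{Nw_j}$ is again the stabilizer projection with the same generators and possibly altered signs. Expanding $\mathrm{Tr}(P_{w_i}P_{Nw_j})$ and using that a product of Pauli tensors is traceless unless it equals $\pm1_{M_8(\C)}$ converts the orthogonality of $w_i$ and $Nw_j$ into an explicit parity (an $\F$-linear) condition in the commutation data of $N$ with the generators in \eqref{Stabs}. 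The main obstacle is exactly this bookkeeping: I expect to verify, using the Pauli relations $X^2=Z^2=I$, $Y^2=-I$, $XZ=-ZX$ and the explicit generators in \eqref{Stabs}, that the hypothesis $d(k,l)=d(s,t)$ forces $N$ to commute with an \emph{even} number, and $d(k,l)\neq d(s,t)$ with an \emph{odd} number, of the relevant stabilizer generators, which is precisely what produces the sign reversal distinguishing (ii) from (i) and closes the proof.
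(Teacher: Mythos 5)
Your part (i) is correct and is essentially the paper's argument: reduce to $P_{M_{ks}w_i}P_{M_{lt}w_j}$, use Lemma \ref{edgepermutation} to take $M_{ks}=M_{lt}=M$, and use that $M$ is orthogonal, so the product vanishes iff $\langle w_i,w_j\rangle=0$. Your preliminary reduction (vanishing of the product iff $\langle M_{ks}w_i,M_{lt}w_j\rangle=0$, with well-definedness of $M_{ks}$ up to stabilizer) is also fine.

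Part (ii), however, contains a genuine gap: everything after ``The hard part is establishing this flip'' is a plan, not a proof. The assertion that $d(k,l)\neq d(s,t)$ forces the sign reversal --- which you phrase as $N$ anticommuting with an odd number of ``relevant'' stabilizer generators --- is exactly the content of statement (ii), and you only say you ``expect to verify'' it by Pauli bookkeeping that is never carried out. Worse, the parity condition is not actually pinned down: what controls $\mathrm{Tr}(P_{w_i}P_{Nw_j})$ is the sign with which the unique nontrivial element of $\mathrm{Stab}_L(V_i)\cap\mathrm{Stab}_L(V_j)$ appears in the expansion of $P_{Nw_j}$, and nothing in your sketch connects that sign to the distance hypothesis $d(k,l)\neq d(s,t)$; establishing the link case by case over the $105$ pairs $(i,j)$ and all admissible $N$ would be a substantial verification you have not done. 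The frustrating part is that you already hold every fact needed for the paper's two-line finish: for fixed $k,s,l$, the map $t\mapsto M_{lt}w_j$ is a bijection from $V_j$ onto the eight basis vectors of the $j$-th orbit (this is shown in the proof of Lemma \ref{qpermmatrix}), and $M_{ks}w_i$ is orthogonal to exactly four of them, since every vertex of $V_i$ has exactly four neighbors in $V_j$ --- a fact you even quote when choosing $t'$. Hence $u_{ks}^{(i)}u_{lt}^{(j)}=0$ for exactly four of the eight $t\in V_j$. By part (i), the four $t$ with $d(s,t)=d(k,l)$ give zero products precisely when $\langle w_i,w_j\rangle=0$; counting then forces the other four $t$, those with $d(s,t)\neq d(k,l)$, to give zero products precisely when $\langle w_i,w_j\rangle\neq 0$, which is (ii). Replacing your conjectured commutation bookkeeping by this counting step would close the gap and would recover the paper's proof.
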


\begin{proof}
We start with the proof of $(i)$. Since $d(k,l)=d(s,t)$, there exists $\sigma_M\in L$ such that $\sigma_M(k)=s$ and $\sigma_M(l)=t$ by Lemma \ref{edgepermutation}. Thus, we can choose $M_{ks}=M_{lt}:=M$ (note that $M_{kl}$ and $M_{st}$ are unique up to the stabilizers of $w_i$ and $w_j$, respectively) and get
\begin{align}\label{eq1}
    u_{ks}^{(i)}u_{lt}^{(j)}=MP_{w_i}M^*MP_{w_j}M^*=MP_{w_i}P_{w_j}M^*,
\end{align}
since we have $M^*M=1$ for all $M$ associated to the permutations in $L$. Since $P_{w_i}$ and $P_{w_j}$ are the rank-$1$ projections associated to $w_i$ and $w_j$, respectively, we have $\langle w_i,w_j \rangle=0$ if and only if $P_{w_i}P_{w_j}=0$. We see that the latter is equivalent to $u_{ks}^{(i)}u_{lt}^{(j)}=0$ by multiplying $M^*$ from the left and $M$ from the right in \eqref{eq1}. 

For $(ii)$, recall from Lemma \ref{qpermmatrix} that $u_{ks}^{(i)}$ and $u_{lt}^{(j)}$ are the rank-$1$ projections of the vectors associated to the vertices in $V_i$ and $V_j$, respectively. Fix $k,s\in V_i$ and $l\in V_j$. As every vertex in $V_i$ is connected to four vertices in $V_j$, every rank-$1$ projection associated to $V_i$ is orthogonal to four rank-$1$ projections associated to $V_j$. Thus, we have $u_{ks}^{(i)}u_{lt}^{(j)}=0$ for exactly four $t\in V_j$. Note that by \eqref{eq1}, we either have $u_{ks}^{(i)}u_{lt}^{(j)}=0$ for all $t$ with $d(k,l)=d(s,t)$ or it holds $u_{ks}^{(i)}u_{lt}^{(j)}=0$ for all $t$ with $d(k,l)\neq d(s,t)$. The assertion then follows from $(i)$. 
\end{proof}

\subsection[Rank 4 graph]{A rank $4$ graph from independent sets of the folded halved $8$-cube graph}

The folded halved $8$-cube graph can be described as follows. Denote by $\mathbf{1}$ the all ones vector in $\F_2^8$. The vertex set of the folded halved $8$-cube graph is the set of all pairs $\{x,\mathbf{1}+x\}$ for all $x\in \F_2^8$ with $\sum_i x_i=0$. Two vertices $\{x,\mathbf{1}+x\}$ and $\{y,\mathbf{1}+y\}$ are adjacent if and only if $x+y$ or $x+y+\mathbf{1}$ is a vector $v$ having $v_i=1$ in exactly two positions. 

We need the folded halved $8$-cube graph in the next definition. 

\begin{definition}[\cite{BIK}]\label{rank4graph}
Let $\Gamma_1$ be the following graph. Take as vertices one orbit of the independent sets of size $8$ under $\Z_2^6\times A_8$ in the folded halved $8$-cube graph, where two vertices are adjacent if the associated independent sets have two points in common. 
\end{definition}

The graph is strongly regular with parameters $(120, 56, 28, 24)$ and has rank $4$, meaning that the stabilizer $\mathrm{Stab}_{\aut(\Gamma_1)}(v)$ for a vertex $v$ has $4$ orbits (see \cite{BIK}). Note that the complement of this graph also came up in \cite[Table 2.2]{MS}, isomorphic to the graphs $G_2$ -- $G_5$. To see how this graph is related to $G_{E_8}$, we will now introduce graphs $G^{\boldsymbol{w}}$ and show that those are isomorphic to the complement of $\Gamma_1$. The definition of $G^{\boldsymbol{w}}$ comes from the relations between the entries of the quantum permutation matrix in Lemma \ref{prod0}. 

\begin{definition}\label{defqisograph}
For each $i \in \{1,\dots, 15\}$, choose a vector $w_i$ associated to one of the vertices in $V_i$, where $V_i$ are the sets of vertices as in $\eqref{vertexpartition}$. Let $\boldsymbol{w}=\{w_1, \dots, w_{15}\}$ and define the graph $G^{\boldsymbol{w}}$ as follows. We let $V(G^{\boldsymbol{w}})=V(G_{E_8})$. If $s\in V_i$, $t\in V_j$ with $\langle w_i, w_j\rangle \neq 0$, we let $(s,t)\in E(G^{\boldsymbol{w}})$ if and only if $(s,t)\in E(G_{E_8})$. If $s\in V_i$, $t\in V_j$ with $\langle w_i, w_j\rangle = 0$, we let $(s,t)\in E(G^{\boldsymbol{w}})$ if and only if $(s,t)\notin E(G_{E_8})$. 
\end{definition}

A convenient choice for the vectors $w_i$ are $e_1-e_j$, $x_{\{1,j\}}$, $x_{\emptyset}$ for $j \in \{2,\dots, 8\}$. Then, the only vectors that are orthogonal are $e_1-e_j, x_{\{1,j\}}$ and $e_1-e_j, x_{\emptyset}$ for $j \in \{2,\dots, 8\}$. We will see that we get isomorphic graphs for any choice of $w_i$. First, we look at automorphisms of $G^{\boldsymbol{w}}$. 

\begin{lemma}\label{autsG'}
Let $G^{\boldsymbol{w}}$ as in Definition \ref{defqisograph} and $L$ as in Lemma \ref{Lsubgroup}. We have $L \subseteq \aut(G^{\boldsymbol{w}})$. Furthermore, for $v_x \in V_i$ and $v_y \in V_j$, we have $(v_x,v_y) \notin E(G^{\boldsymbol{w}})$ if and only if there exists $\sigma\in L$ such that $\sigma(v_{w_i})=v_x$ and $\sigma(v_{w_j})=v_y$.
\end{lemma}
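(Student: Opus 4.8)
The plan is to establish the containment $L \subseteq \aut(G^{\boldsymbol{w}})$ first and then use it as the engine for the backward implication of the equivalence, while the forward implication will rest on Lemma \ref{edgepermutation}.

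For $L \subseteq \aut(G^{\boldsymbol{w}})$ I would exploit the single structural fact that each $\sigma \in L$ fixes every orbit $V_i$ setwise -- this is automatic, since the $V_i$ are by construction the orbits of $L$ acting on $V(G_{E_8})$, so $\sigma(V_i)=V_i$. The adjacency rule of Definition \ref{defqisograph} for a pair $s \in V_i$, $t \in V_j$ reads off only (a) the orbit indices $i,j$, via the sign of $\langle w_i, w_j\rangle$, and (b) whether $(s,t) \in E(G_{E_8})$. An element of $L$ preserves (a) because it fixes orbits setwise, and preserves (b) because it is a $G_{E_8}$-automorphism by Lemma \ref{Lsubgroup}; hence it preserves $G^{\boldsymbol{w}}$-adjacency. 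I expect this part to be essentially immediate once the orbit-preservation is stated.

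For the equivalence I would first record the elementary distance dictionary in $G_{E_8}$: being a connected strongly regular graph, it has diameter $2$, so distinct vertices $v_a, v_b$ satisfy $d(v_a,v_b)=1$ exactly when $\langle a,b\rangle=0$ and $d(v_a,v_b)=2$ otherwise. For the forward direction, assuming $(v_x,v_y)\notin E(G^{\boldsymbol{w}})$ with $i\neq j$, I would split on the sign of $\langle w_i,w_j\rangle$ and unwind Definition \ref{defqisograph}: if $\langle w_i,w_j\rangle\neq 0$ the non-edge of $G^{\boldsymbol{w}}$ is a non-edge of $G_{E_8}$, giving $d(v_x,v_y)=2=d(v_{w_i},v_{w_j})$; if $\langle w_i,w_j\rangle= 0$ it is an edge of $G_{E_8}$, giving $d(v_x,v_y)=1=d(v_{w_i},v_{w_j})$. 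In both cases $d(v_{w_i},v_{w_j})=d(v_x,v_y)$, which is precisely the hypothesis of Lemma \ref{edgepermutation} applied with $k=v_{w_i},\,s=v_x$ in $V_i$ and $l=v_{w_j},\,t=v_y$ in $V_j$; this yields the desired $\sigma$.

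The backward direction then follows cheaply from the first part: the same case split shows $(v_{w_i},v_{w_j})\notin E(G^{\boldsymbol{w}})$ for every $i\neq j$ (orthogonal representatives have their $G_{E_8}$-edge flipped away, non-orthogonal ones never had one), so any $\sigma\in L$ with $\sigma(v_{w_i})=v_x$ and $\sigma(v_{w_j})=v_y$ carries this non-edge to $(v_x,v_y)$ because $\sigma\in\aut(G^{\boldsymbol{w}})$. The degenerate case $i=j$ I would dispatch separately: there $G^{\boldsymbol{w}}$ coincides with $G_{E_8}$ on $V_i$ (as $\langle w_i,w_i\rangle\neq 0$), a clique, so a non-edge forces $v_x=v_y$, matched by any $\sigma$ sending $v_{w_i}$ to $v_x$. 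The one genuinely substantive step is the forward direction, and its crux is the clean translation of ``$(v_x,v_y)$ is a $G^{\boldsymbol{w}}$-non-edge'' into the distance equality $d(v_x,v_y)=d(v_{w_i},v_{w_j})$ that unlocks Lemma \ref{edgepermutation}; everything else is bookkeeping together with the orbit-preservation observation.
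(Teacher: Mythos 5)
Your proof is correct, but the engine you use for the key equivalence differs from the paper's. You and the paper prove $L \subseteq \aut(G^{\boldsymbol{w}})$ identically (orbit preservation plus $\sigma_M \in \aut(G_{E_8})$), and both note that $(v_{w_i}, v_{w_j}) \notin E(G^{\boldsymbol{w}})$ for all $i,j$, which gives the backward implication. For the forward implication, however, the paper does not invoke Lemma \ref{edgepermutation} at all: it runs a counting argument, pushing the non-edge $(v_{w_i}, v_{w_j})$ around by automorphisms $\sigma_N\sigma_B$ with $\sigma_N \in \mathrm{Stab}_L(V_i)$, obtaining $|\mathrm{Stab}_L(V_i)/(\mathrm{Stab}_L(V_i)\cap \mathrm{Stab}_L(V_j))| = 4$ non-neighbors of $v_x$ in each $V_j$, hence $14\cdot 4 = 56$ in total, and then matching this against the $63$-regularity of $G^{\boldsymbol{w}}$ (so $120-1-63=56$) to conclude that \emph{every} non-neighbor arises as an image of some $v_{w_j}$. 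You instead translate ``$(v_x,v_y)$ is a non-edge of $G^{\boldsymbol{w}}$'' into the distance equality $d(v_x,v_y) = d(v_{w_i},v_{w_j})$ in $G_{E_8}$ (using that $G_{E_8}$ has diameter $2$, so distance $1$ means orthogonal and distance $2$ means non-orthogonal), and then let Lemma \ref{edgepermutation} produce $\sigma$. Your route is more modular: the stabilizer-index computation that the paper re-derives inline is exactly what powers the proof of Lemma \ref{edgepermutation}, so you avoid duplicating it, at the mild cost of needing the standard diameter-$2$ fact for strongly regular graphs with $\mu > 0$. The paper's route is self-contained and delivers both directions simultaneously by exactly enumerating the non-neighbors. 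A small additional merit of your write-up is that you explicitly dispatch the degenerate case $i=j$, which the paper passes over silently.
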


\begin{proof}
Let $\sigma_M \in L$. Since we have $V(G^{\boldsymbol{w}})=V(G_{E_8})$, we know that $\sigma_M $ is well-defined on $V(G^{\boldsymbol{w}})$ by Lemma \ref{Lsubgroup}. Let $V_i$, $i \in \{1, \dots, 15\}$ as in \eqref{vertexpartition}. By definition, we have $\sigma_M(v_x)\in V_i$ for all $v_x \in V_i$. Using this, the definition of $G^{\boldsymbol{w}}$ and $\sigma_M \in \aut(G_{E_8})$, we see that $\sigma_M \in \aut(G^{\boldsymbol{w}})$. Thus $L \subseteq \aut(G^{\boldsymbol{w}})$.

For the second assertion, first note that we have $(v_{w_i}, v_{w_j})\notin E(G^{\boldsymbol{w}})$ for every $i, j$.  We will denote by $\mathrm{Stab}_L(V_i)$ the stabilizer subgroup of all vertices in $V_i$. Let $v_x \in V_i$ and define $B:=M_{w_ix}$. Since $\sigma_{B}\in \aut(G^{\boldsymbol{w}})$, we have $(v_x, v_{Bw_j})\notin E(G^{\boldsymbol{w}})$ for all $j \neq i$. For every $\sigma_N\in \mathrm{Stab}_L(V_i)\backslash \mathrm{Stab}_L(V_j)$, we thus get a vertex $v_{NBw_j}$ with $(v_x, v_{NBw_j})\notin E(G^{\boldsymbol{w}})$. Since we have $|\mathrm{Stab}_L(V_i)/ (\mathrm{Stab}_L(V_i)\cap \mathrm{Stab}_L(V_j))|=4$, we thus get four non-neighbors of $v_x$ for every $j \neq i$. We get $56$ non-neighbors in this way and since the graph $G^{\boldsymbol{w}}$ is $63$-regular (the construction from $G_{E_8}$ does not change the number of neighbors of a vertex), those are all non-neighbors in $G^{\boldsymbol{w}}$. 
\end{proof}

\begin{cor}
The graphs $G^{\boldsymbol{w}^{(1)}}$ and $G^{\boldsymbol{w}^{(2)}}$ are isomorphic for any choice of $w_i^{(1)}\in V_i$ and $w_i^{(2)}\in V_i$, $i\in \{1,\dots, 15\}$. 
\end{cor}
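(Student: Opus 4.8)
The plan is to construct an explicit isomorphism $\varphi : V(G^{\boldsymbol{w}^{(1)}}) \to V(G^{\boldsymbol{w}^{(2)}})$ built from elements of $L$, using only the characterization of non-edges provided by Lemma~\ref{autsG'}. Since both graphs share the vertex set $V(G_{E_8})$ and agree with $G_{E_8}$ inside each clique $V_i$ (every $V_i$ is complete, so there are no non-edges within an orbit), it suffices to match up the \emph{between-orbit} non-edges. By Lemma~\ref{autsG'}, for $v_x\in V_i$, $v_y\in V_j$ with $i\neq j$ we have $(v_x,v_y)\notin E(G^{\boldsymbol{w}})$ if and only if some $\sigma\in L$ satisfies $\sigma(v_{w_i})=v_x$ and $\sigma(v_{w_j})=v_y$; this is the only fact about the graphs I will use.

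First I would pick, for each $i\in\{1,\dots,15\}$, an element $\sigma_i\in L$ with $\sigma_i(v_{w_i^{(1)}})=v_{w_i^{(2)}}$; such a $\sigma_i$ exists because $w_i^{(1)}$ and $w_i^{(2)}$ lie in the common orbit $V_i$. I then define $\varphi$ piecewise by $\varphi(v):=\sigma_i(v)$ for $v\in V_i$. Because each orbit $V_i$ is $L$-invariant, every $\sigma_i$ restricts to a bijection of $V_i$, so $\varphi$ is a well-defined bijection of $V(G_{E_8})$ that preserves each $V_i$ setwise; in particular it respects the partition and carries the within-orbit cliques to themselves.

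The crux is to check that $\varphi$ sends non-edges to non-edges. Take a between-orbit non-edge $(v_x,v_y)$ of $G^{\boldsymbol{w}^{(1)}}$ with $v_x\in V_i$, $v_y\in V_j$, and let $\sigma\in L$ realize it, i.e. $\sigma(v_{w_i^{(1)}})=v_x$ and $\sigma(v_{w_j^{(1)}})=v_y$. Here I would use that $L\cong\Z_2^6$ is \emph{abelian}: since $\sigma$ and $\sigma_i$ commute,
\[
\sigma(v_{w_i^{(2)}}) = \sigma\sigma_i(v_{w_i^{(1)}}) = \sigma_i\sigma(v_{w_i^{(1)}}) = \sigma_i(v_x) = \varphi(v_x),
\]
and likewise $\sigma(v_{w_j^{(2)}})=\varphi(v_y)$. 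Thus the \emph{same} element $\sigma$ realizes $(\varphi(v_x),\varphi(v_y))$ as a non-edge of $G^{\boldsymbol{w}^{(2)}}$, again by Lemma~\ref{autsG'}. Running the identical computation backwards, and using $\sigma_i^{-1}=\sigma_i$ (every element of $\Z_2^6$ is an involution), gives the converse implication, so $(v_x,v_y)\notin E(G^{\boldsymbol{w}^{(1)}})$ if and only if $(\varphi(v_x),\varphi(v_y))\notin E(G^{\boldsymbol{w}^{(2)}})$. Combined with the fact that $\varphi$ preserves the orbit partition and that all within-orbit pairs are edges in both graphs, this shows $\varphi$ is an isomorphism.

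The main obstacle is conceptual rather than computational: $\varphi$ is glued from \emph{different} group elements $\sigma_i$ on the different orbits and is therefore not itself an element of $L$, so it is not a priori clear that it preserves adjacency between two distinct orbits. What rescues the argument is precisely the commutativity of $L$, which lets me transport a realizing automorphism $\sigma$ from the $\boldsymbol{w}^{(1)}$-side to the $\boldsymbol{w}^{(2)}$-side unchanged; if $L$ were non-abelian the per-orbit corrections $\sigma_i$ and $\sigma_j$ would interfere and this clean cancellation would fail.
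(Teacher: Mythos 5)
Your proof is correct and takes essentially the same route as the paper: the paper defines the identical piecewise map, choosing $\sigma_i = \sigma_{N_i}$ with $N_i := M_{w_i^{(1)}w_i^{(2)}}$ as a concrete realization of your orbit-wise elements, and then transports the realizing automorphism between the two graphs using exactly the ingredients you identify --- the non-edge characterization of Lemma \ref{autsG'}, the commutativity of $L$, and the fact that its elements are involutions.
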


\begin{proof}
Since we have $V(G^{\boldsymbol{w}^{(1)}})=V(G^{\boldsymbol{w}^{(2)}})=V(G_{E_8})$, we can partition the vertex sets into $V_i$ as in \eqref{vertexpartition}. Let $N_i:=M_{w_i^{(1)}w_i^{(2)}}$, where $M_{w_i^{(1)}w_i^{(2)}}$ is as in Lemma \ref{qpermmatrix}. We claim that the map $\varphi:V(G^{\boldsymbol{w}^{(1)}})\mapsto V(G^{\boldsymbol{w}^{(2)}})$, $v_x \in V_i\mapsto v_{N_ix}\in V_i$ is an isomorphism between $G^{\boldsymbol{w}^{(1)}}$ and $G^{\boldsymbol{w}^{(2)}}$. Let $v_x\in V_i, v_y\in  V_j$. Assume $(v_{N_ix}, v_{N_iy})\notin E(G^{\boldsymbol{w}^{(2)}})$. By Lemma \ref{autsG'} this is equivalent to the existence of $\tau \in L$ such that $\tau(v_{w_i^{(2)}})=v_{N_ix}$ and $\tau(v_{w_j^{(2)}})=v_{N_jy}$. Using $v_{N_ix}=\sigma_{N_i}(v_x)$, $\sigma_{N_i}^2=\mathrm{id}$,  $\sigma_{N_i}\circ \tau=\tau \circ \sigma_{N_i}$ and $\sigma_{N_i}(v_{w_i^{(2)}})=v_{w_i^{(1)}}$ and similar for $v_y$ and $j$, this is equivalent to $\tau(v_{w_i^{(1)}})=v_x$ and $\tau(v_{w_j^{(1)}})=v_y$. Again using Lemma \ref{autsG'}, we get those equations if and only if $(v_x,v_y)\notin E(G^{\boldsymbol{w}^{(1)}})$. Thus, $\varphi$ is an isomorphism between $G^{\boldsymbol{w}^{(1)}}$ and $G^{\boldsymbol{w}^{(2)}}$. 
\end{proof}

We will now show that $\overline{\Gamma}_1$ is isomorphic to the graph $G^{\boldsymbol{w}}$. 

\begin{theorem}\label{isographs}
The complement of the graph $G^{\boldsymbol{w}}$ is isomorphic to the graph $\Gamma_1$ described in Definition \ref{rank4graph}.
\end{theorem}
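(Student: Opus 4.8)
The plan is to construct an explicit bijection $\psi\colon V(G_{E_8})\to V(\Gamma_1)$ and to show it carries the edges of $\overline{G^{\boldsymbol w}}$ onto those of $\Gamma_1$. The first step is to put $V(\Gamma_1)$ into a linear-algebraic model. Identifying the eight coordinate positions with $\F_2^3$, the even-weight code in $\F_2^8$ becomes the Reed--Muller code $\mathrm{RM}(2,3)$ (dimension $7$, containing $\mathbf 1$), and the vertices of the folded halved $8$-cube are exactly the classes of $\mathrm{RM}(2,3)/\langle\mathbf 1\rangle$ (sixty-four of them), two being adjacent iff their difference has weight $2$ or $6$. I would then show that the independent $8$-sets making up $V(\Gamma_1)$ are precisely the reductions modulo $\langle\mathbf 1\rangle$ of the cosets of the fifteen images $C_1,\dots,C_{15}$ of $\mathrm{RM}(1,3)$ under the $A_8$-action by even coordinate permutations: each $C_i$ is a $4$-dimensional subcode of $\mathrm{RM}(2,3)$ containing $\mathbf 1$ whose nonzero, non-$\mathbf 1$ words have weight $4$, so the eight classes of any coset of $C_i$ are pairwise at distance $4$, hence pairwise non-adjacent. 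This yields $15\times 8=120$ independent $8$-sets, and one checks they form a single $\Z_2^6\times A_8$-orbit (with $\Z_2^6$ the translation group $\mathrm{RM}(2,3)/\langle\mathbf 1\rangle$), which I would identify with the defining orbit of Definition~\ref{rank4graph}.

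Second, I would build $\psi$ equivariantly. Both graphs carry a regular $\Z_2^6$-action: on $V(\overline{G^{\boldsymbol w}})=V(G_{E_8})$ it is the group $L$ of Lemma~\ref{Lsubgroup}, and on $V(\Gamma_1)$ it is translation. Matching the orbits $V_i$ of \eqref{vertexpartition} to the copies $C_i$, I set $\psi(v_{w_i}):=C_i$ and extend by $\psi(\sigma(v_{w_i})):=\theta(\sigma)\cdot C_i$, where $\theta\colon L\to\mathrm{RM}(2,3)/\langle\mathbf 1\rangle$ is a fixed isomorphism. This is well defined and bijective precisely when $\theta$ sends the stabiliser $\mathrm{Stab}_L(V_i)$ (an order-$8$ subgroup of $L\cong\F_2^6$, listed in \eqref{Stabs}) to the order-$8$ subgroup $C_i/\langle\mathbf 1\rangle$ of translations fixing the coset $C_i$, for every $i$ simultaneously; the eight classes of $C_i$ are then the images of the eight vertices of $V_i$, and distinct copies give distinct sets.

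Third comes the adjacency check, which an equivariant $\psi$ reduces to one pair per block. By Lemma~\ref{autsG'} the edges of $\overline{G^{\boldsymbol w}}$ between $V_i$ and $V_j$ are exactly the $L$-orbit of $(v_{w_i},v_{w_j})$, so it suffices that $\psi(v_{w_i})=C_i$ and $\psi(v_{w_j})=C_j$ are adjacent in $\Gamma_1$, i.e. share exactly two points. The fifteen copies pairwise intersect in dimension $2$ (forced by the parameters $(120,56,28,24)$: adjacency between two blocks is $4$-regular, so each block meets every other in a dimension-$2$ code), hence $|C_i\cap C_j|=4$ reduces to two folded vertices, giving the required edge; a count shows the translation-orbit of $(C_i,C_j)$ is all thirty-two adjacent pairs between the two blocks, so equivariance propagates adjacency to every pair, while distinct cosets of a fixed $C_i$ are disjoint and account for the non-edges inside each $V_i$.

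The main obstacle is the simultaneous subgroup matching in the second step: exhibiting a single $\theta\in\mathrm{GL}_6(\F_2)$ carrying all fifteen Lagrangians $\mathrm{Stab}_L(V_i)$ onto the fifteen subspaces $C_i/\langle\mathbf 1\rangle$ at once. Both are the canonical configuration of fifteen order-$8$ subspaces of $\F_2^6$ meeting pairwise in order $2$ -- on the $G_{E_8}$ side the Lagrangians of commuting $3$-qubit Paulis, on the code side the $\mathrm{RM}(1,3)$-copies inside $\mathrm{RM}(2,3)/\langle\mathbf 1\rangle$ -- so such a $\theta$ exists, and verifying this alignment is the technical heart of the argument. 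Working with the explicit representatives $e_1-e_j,\ x_{\{1,j\}},\ x_\emptyset$ of Definition~\ref{defqisograph}, whose incidences are transparent, turns it into a finite check.
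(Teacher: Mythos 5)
Your strategy is in essence the paper's own: the paper also realizes the vertices of $\Gamma_1$ as the $120$ cosets $NC(i)$ of fifteen order-$8$ subgroups of $\Z_2^6$, sends $v_x\in V_i$ to the coset $M_{w_ix}C(i)$, and then combines Lemma \ref{autsG'} with a regularity/counting argument to match edges between blocks. The difference lies in the model chosen for the folded halved $8$-cube. The paper passes to $VO_6^+(2)$ and immediately re-coordinatizes its vertex set by the $3$-tensor Pauli matrices $X^{x_1}Z^{x_2}\otimes X^{x_3}Z^{x_4}\otimes X^{x_5}Z^{x_6}$; with that choice the fifteen cliques $C(i)$ of \eqref{clique1} are, by definition, exactly the stabilizer subgroups $\mathrm{Stab}_L(V_i)$ of \eqref{Stabs}, so the two fifteen-subspace configurations are literally the same objects and no alignment is needed. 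You instead keep the Reed--Muller model $\mathrm{RM}(2,3)/\langle\mathbf{1}\rangle$ on the $\Gamma_1$ side, which forces the extra step of producing a single $\theta\in\mathrm{GL}_6(\F_2)$ carrying all fifteen subgroups $\mathrm{Stab}_L(V_i)$ onto all fifteen copies of $\mathrm{RM}(1,3)/\langle\mathbf{1}\rangle$ simultaneously.

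That step is where your proposal has a genuine gap. You assert that $\theta$ exists because both families are ``the canonical configuration of fifteen order-$8$ subspaces of $\F_2^6$ meeting pairwise in order $2$''; but the statement that any two such configurations are $\mathrm{GL}_6(\F_2)$-equivalent is precisely what needs proving, and nothing in the proposal proves it --- as written, the justification assumes the conclusion. The claim is in fact true: both families consist of totally singular $3$-spaces forming one ruling class of a hyperbolic quadric (the Pauli stabilizers for the form counting $Y$'s modulo $2$, which is the $Q$ defining $VO_6^+(2)$; the $\mathrm{RM}(1,3)$-copies for $Q(\bar{x})=\tfrac{1}{2}\mathrm{wt}(x)\bmod 2$), and such pairs (quadric, ruling class) lie in one $\mathrm{GL}_6(\F_2)$-orbit. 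But either this argument or the explicit finite check you defer to must actually be carried out; it is not a formality, and avoiding it entirely is exactly what the paper's choice of coordinates accomplishes. A second, smaller flaw: you justify $\dim(C_i\cap C_j)=2$ by appealing to the parameters $(120,56,28,24)$ of $\Gamma_1$, which is circular at that point --- you cannot invoke properties of $\Gamma_1$ to analyze your candidate model before the identification with $\Gamma_1$ is established (this is harmless only because the intersection dimension is directly computable). Once $\theta$ is in hand, the rest of your argument --- equivariance, the $32$-element translation orbit of $(C_i,C_j)$ exhausting the adjacent pairs between two blocks, disjointness of cosets within a block --- is correct and parallels the paper's final counting step.
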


\begin{proof}
First note that the folded halved (or halved folded) $8$-cube graph is the complement of $VO_6^+(2)$, see \cite[Section 10.26]{BVM}. The latter graph has vertex set $\F_2^6$ and two vertices $x,y$ are adjacent if $Q(x+y)=0$, where $Q(z)=z_1z_2+z_3z_4+z_5z_6$. We will use an equivalent description. Take as vertex set $X^{x_1}Z^{x_2}\otimes X^{x_3}Z^{x_4}\otimes X^{x_5}Z^{x_6}$ for the Pauli matrices $X, Z$ as in \eqref{Pauli}, $x\in \{0,1\}^6$. Then two vertices are adjacent if in the product of the $3$-tensor Pauli matrices, the matrix $Y=XZ$ appears either zero or two times in the three tensor legs (we will consider the $3$-tensor Pauli matrices up to a sign). We construct the graph $\Gamma_1$ from the following $120$ cliques of size $8$ in $VO_6^+(2)$. 
Recall from \eqref{Stabs} the stabilizer subgroups of the vertices. We will denote by $\mathrm{Stab}_L(V_i)$ the stabilizer subgroup of all vertices in $V_i$ (they are the same by Lemma \ref{stabeq}). For $i \in \{1,\dots, 15\}$, let 
\begin{align}\label{clique1}
   C(i)=\{M=M_1\otimes M_2 \otimes M_3,\, M_i\in \{I,X,Y,Z\}\,|\, \sigma_M \in \mathrm{Stab}_L(V_i)\}.
\end{align}
They form cliques in $VO_6^+(2)$ as all elements in $C(i)$, and thus their products contain either $0$ or $2$ times the matrix $Y$ in the tensor legs. The remaining $105$ cliques of size $8$ that we choose are of the form
\begin{align}\label{clique2}
    NC(i)=\{NM \,|\, M \in C(i)\},
\end{align}
where $N=N_1\otimes N_2 \otimes N_3$ with $N_i\in \{I,X,Y,Z\}$ and $\sigma_N \notin \mathrm{Stab}_L(V_i)$. In \eqref{clique2}, we leave out signs in front of the tensor and consider the corresponding vertex in $V(VO_6^+(2))$ to $NM$. We get seven additional cliques for every $C(i)$ in this way. Choosing those $120$ cliques as vertices, which are adjacent if the corresponding cliques share two vertices of $VO_6^+(2)$ gives us the graph $\Gamma_1$ (Definition \ref{rank4graph}) because of the following. The subgroup $\Z_2^6$ is generated by left multiplication with elements of the form $N=N_1\otimes N_2 \otimes N_3$ with $N_i\in \{I,X,Y,Z\}$. Thus, we get from $C(i)$ to $NC(i)$ using those automorphisms. Let $q(x,v)=Q(x+v)+Q(v)+Q(x)$ for $x,v \in \F_2^6$. The subgroup $A_8$ of the automorphism group of $VO_6^+(2)$ is generated by even products of maps $t_v:\F_2^6\to \F_2^6$, $t_v(x)=x+q(x,v)v$, where $Q(v)=1$ (inducing automorphisms on $X^{x_1}Z^{x_2}\otimes X^{x_3}Z^{x_4}\otimes X^{x_5}Z^{x_6}$). One checks that one gets from $C(i)$ to $C(j)$ using those automorphisms (for example, we use $t_v\circ t_w$ with $v=(001100), w=(000011)$ to obtain $C(2)$ from $C(1)$). Since we know from \cite{BIK} that there are $120$ cliques in the orbit, the cliques in \eqref{clique1} and \eqref{clique2} are all of them. 

We will now use the same labelling for the complement of $G^{\boldsymbol{w}}$ to see that the graphs are isomorphic. Let $G^{\boldsymbol{w}}$ as in Definition \ref{defqisograph}. Relabel the vertices $v_{w_i}$ by $C(i)$ as in \eqref{clique1} and $v_x\in V_i$ by $M_{w_ix}C(i)$ as in \eqref{clique2}, where $M_{w_ix}$ is as in Lemma \ref{qpermmatrix}. In this labelling, the automorphisms $\sigma_M\in L$ send $NC(i)$ to $(MN)C(i)$. By Lemma \ref{autsG'}, we know that vertices $v_x$ and $v_y$ in the complement of $G^{\boldsymbol{w}}$ are adjacent if and only if there exists $\sigma_N\in L$ such that $\sigma_N(v_{w_i})=v_x$ and $\sigma_N(v_{w_j})=v_y$. With the new labelling, this means that vertices $AC(i)$ and $BC(j)$ are adjacent if and only if there exists $\sigma_N\in L$ such that $\sigma_N(C(i))=AC(i)$ and $\sigma_N(C(j))=BC(j)$. Looking at \eqref{Stabs}, we see that every $C(i)$ and $C(j)$ have two points in common. Since for adjacent $AC(i)$ and $BC(j)$ there exists $\sigma_N \in L$ such that $\sigma_N(C(i))=AC(i)$ and $\sigma_N(C(j))=BC(j)$, the sets $AC(i)$ and $BC(j)$ also have two points in common. We know that $\Gamma_1$ is $56$-regular, thus every clique shares two points with exactly $56$ other cliques. Since the complement of $G^{\boldsymbol{w}}$ is also $56$-regular and all neighbors share two points, we see that the existence of an automorphism $\sigma_N$ with $\sigma_N(C(i))=AC(i)$ and $\sigma_N(C(j))=BC(j)$ is equivalent to $AC(i)$ and $BC(j)$ sharing two points. Therefore, the complement of the graph $G^{\boldsymbol{w}}$ is isomorphic to the graph $\Gamma_1$. 
\end{proof}

\subsection[The quantum isomorphism]{The quantum isomorphism between $G_{E_8}$ and $\overline{\Gamma}_1$}

We will now show that $G_{E_8}$ and $\overline{\Gamma}_1$ are quantum isomorphic. This will follow from Lemma \ref{prod0} and Theorem \ref{isographs}.

\begin{theorem}\label{qisostronglyregular}
Let $G_{E_8}$ be the graph as in Definition \ref{defGE8} and $\Gamma_1$ as in Definition \ref{rank4graph}. Then $G_{E_8}$ and $\overline{\Gamma}_1$ are quantum isomorphic, non-isomorphic strongly regular graphs.  
\end{theorem}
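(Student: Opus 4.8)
The plan is to split the statement into three independent claims and handle each separately: (1) that $G_{E_8}$ and $\overline{\Gamma}_1$ are quantum isomorphic, (2) that they are non-isomorphic, and (3) that both are strongly regular with the same parameters. The quantum isomorphism is the main content, and it follows almost immediately by combining the earlier results. Specifically, I would take the quantum permutation matrix $u$ constructed in Lemma \ref{qpermmatrix} and verify the criterion of Lemma \ref{lemprod0} applied to the pair $(G_{E_8}, \overline{G^{\boldsymbol{w}}})$, using Lemma \ref{prod0} to compute the relevant products of entries. Since $u$ is block-diagonal with respect to the partition \eqref{vertexpartition}, the products $u_{ks}^{(i)}u_{lt}^{(j)}$ for $i \neq j$ are exactly the ones governed by Lemma \ref{prod0}, and the products within a single block $V_i$ vanish automatically by orthogonality of distinct rank-$1$ projections in a basis.

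The key computation is to match the vanishing pattern of Lemma \ref{prod0} with the edge structure demanded by Lemma \ref{lemprod0}. Recall that $G^{\boldsymbol{w}}$ is defined (Definition \ref{defqisograph}) so that for $s \in V_i$, $t \in V_j$ with $\langle w_i, w_j \rangle \neq 0$, adjacency in $G^{\boldsymbol{w}}$ agrees with adjacency in $G_{E_8}$, and for $\langle w_i, w_j \rangle = 0$ it is flipped. In $G_{E_8}$, vertices in distinct cliques $V_i, V_j$ are either adjacent (distance $1$) or at distance $2$, so the condition $d(k,l) = d(s,t)$ in Lemma \ref{prod0} is precisely the condition that the edge-status of the pair $(k,l)$ in $G_{E_8}$ matches that of $(s,t)$. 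Translating through Definition \ref{defqisograph}, this says that $u_{ks}^{(i)}u_{lt}^{(j)} = 0$ exactly when the adjacency in $G_{E_8}$ and in $G^{\boldsymbol{w}}$ disagree in the pattern required by Lemma \ref{lemprod0}. I would carry out this bookkeeping carefully, treating the two cases of Lemma \ref{prod0} and the $\langle w_i, w_j\rangle = 0$ versus $\neq 0$ dichotomy, to conclude $A_{G_{E_8}} u = u A_{\overline{G^{\boldsymbol{w}}}}$. By Theorem \ref{isographs}, $\overline{G^{\boldsymbol{w}}} \cong \Gamma_1$, hence $G^{\boldsymbol{w}} \cong \overline{\Gamma}_1$, and the quantum isomorphism passes to $\overline{\Gamma}_1$.

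For the remaining two claims I would argue as follows. The strong regularity of $G_{E_8}$ with parameters $(120, 63, 30, 36)$ is already asserted in Definition \ref{defGE8}; since quantum isomorphic graphs share all parameters determined by planar-homomorphism counts (in particular the number of vertices, edges, and, as is standard, the full spectrum and hence the strongly regular parameters), $\overline{\Gamma}_1$ must also be strongly regular with the same parameters. Alternatively, one reads this off from $\Gamma_1$ having parameters $(120, 56, 28, 24)$ as stated after Definition \ref{rank4graph} and computing the complementary parameters directly. For non-isomorphism, the decisive invariant is the rank, i.e. the number of orbits of a vertex stabilizer in the automorphism group: $G_{E_8}$ arises from the highly symmetric $E_8$ root system and is rank $3$ (the defining property of being a rank-$3$ strongly regular graph coming from a primitive rank-$3$ group), whereas $\Gamma_1$, and hence $\overline{\Gamma}_1$, has rank $4$ as recorded from \cite{BIK}. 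A rank-$3$ graph cannot be isomorphic to a rank-$4$ graph, so the two are non-isomorphic.

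I expect the main obstacle to be the case analysis in the second paragraph: correctly aligning the distance condition $d(k,l) = d(s,t)$ of Lemma \ref{prod0} with the sign-flip rule of Definition \ref{defqisograph} and the edge hypothesis of Lemma \ref{lemprod0}, making sure the iff directions and the $\langle w_i, w_j\rangle = 0$ cases are threaded consistently rather than inverted. The non-isomorphism, by contrast, is a clean invariant argument once the rank values are quoted, and the strong regularity is essentially immediate from what is already stated.
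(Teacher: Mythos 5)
Your treatment of the quantum isomorphism is essentially the paper's own proof: take $u$ from Lemma \ref{qpermmatrix} with the same $w_i$ used to build $G^{\boldsymbol{w}}$, reduce to the vanishing criterion of Lemma \ref{lemprod0}, dispose of the within-block products by orthogonality of the entries, match the cross-block pattern of Lemma \ref{prod0} against the sign-flip rule of Definition \ref{defqisograph}, and transfer along Theorem \ref{isographs}. One error must be fixed, though: the relation your case analysis actually establishes (and the one you need) is $A_{G_{E_8}}u = uA_{G^{\boldsymbol{w}}}$, \emph{not} $A_{G_{E_8}}u = uA_{\overline{G^{\boldsymbol{w}}}}$ as you write twice. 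Since Theorem \ref{isographs} gives $\overline{G^{\boldsymbol{w}}} \cong \Gamma_1$, the relation as displayed would make $G_{E_8}$ quantum isomorphic to $\Gamma_1$, which is false: $\Gamma_1$ is $56$-regular while $G_{E_8}$ is $63$-regular, and quantum isomorphic graphs have the same edge counts. Your surrounding text (``products vanish when the adjacencies in $G_{E_8}$ and $G^{\boldsymbol{w}}$ disagree'') shows you mean the unbarred graph, so this is a slip rather than a conceptual error, but as written your displayed conclusion contradicts your final sentence.

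For non-isomorphism you take a genuinely different route. The paper compares independence numbers: $\alpha(G_{E_8}) = 8$, quoted from \cite[Section 10.39]{BVM}, whereas $\{v_{w_1},\dots,v_{w_{15}}\}$ is by construction an independent set of size $15$ in $G^{\boldsymbol{w}}$, since Definition \ref{defqisograph} makes $v_{w_i}$ and $v_{w_j}$ non-adjacent for every $i,j$. Your rank comparison is also a legitimate invariant argument, and since a graph and its complement have the same automorphism group, quoting rank $4$ for $\Gamma_1$ from \cite{BIK} does cover $\overline{\Gamma}_1$. The gap is your justification that $G_{E_8}$ has rank $3$: ``arises from the highly symmetric $E_8$ root system'' and being strongly regular do not imply rank $3$ --- indeed $\overline{\Gamma}_1$ is itself a vertex-transitive strongly regular graph of rank $4$, which is precisely the point of the whole construction. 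What you need is the specific fact that $G_{E_8}$ is distance-transitive (hence its automorphism group acts as a rank-$3$ group), recorded in \cite[Section 10.39]{BVM} and invoked by the paper in Remark \ref{remqiso}. With that citation supplied your argument is sound; the paper's independent-set argument has the advantage of exhibiting the distinguishing structure explicitly from the construction. Your handling of strong regularity (complementing the parameters $(120,56,28,24)$ of $\Gamma_1$) is fine and matches what the paper leaves implicit.
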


\begin{proof}
By Theorem \ref{isographs}, we know that $\overline{\Gamma}_1$ is isomorphic to $G^{\boldsymbol{w}}$ for any choice of $\boldsymbol{w}$. To see that $G_{E_8}$ and $G^{\boldsymbol{w}}$ are quantum isomorphic, we have to show that there exists a quantum permutation matrix that fulfills $A_{G_{E_8}}u=uA_{G^{\boldsymbol{w}}}$. By Lemma \ref{lemprod0}, this is equivalent to showing $u_{ks}u_{lt}=0$ for $d(k,l)\neq d(s,t)$, $k,l \in V(G_{E_8}), s,t \in V(G^{\boldsymbol{w}})$. Let $u$ be the quantum permutation matrix as in Lemma \ref{qpermmatrix}, where we choose the same $w_i$ as in the construction of $G^{\boldsymbol{w}}$. Since $u$ has block form (see \eqref{blockmatrix}) and we know that all vertices in $V_i$ are connected in $G_{E_8}$ and $G^{\boldsymbol{w}}$, it remains to show $u_{ks}^{(i)}u_{lt}^{(j)}=0$ for $d(k,l)\neq d(s,t)$, $k,l \in V(G_{E_8}), s,t \in V(G^{\boldsymbol{w}})$ for $i \neq j$. By definition of $G^{\boldsymbol{w}}$, we thus have to show $u_{ks}^{(i)}u_{lt}^{(j)}=0$ for $d(k,l)\neq d(s,t)$, $k,l,s,t \in V(G_{E_8})$ if $\langle w_i,w_j \rangle \neq0$ and $u_{ks}^{(i)}u_{lt}^{(j)}=0$ for $d(k,l)= d(s,t)$, $k,l,s,t \in V(G_{E_8})$  for $\langle w_i,w_j \rangle =0$. But this follows from Lemma \ref{prod0}. 

The graphs $G_{E_8}$ and $G^{\boldsymbol{w}}$ are not isomorphic because of the following. It is well known that $G_{E_8}$ has independence number $8$ (see for example \cite[Section 10.39]{BVM}). By construction, the set of vertices $\{v_{w_1}, \dots, v_{w_{15}}\}$ is an independent set of size $15$ in $G^{\boldsymbol{w}}$, which already shows $G_{E_8}\ncong G^{\boldsymbol{w}}$. 
Thus, $G_{E_8}$ and $\overline{\Gamma}_1$ are quantum isomorphic, non-isomorphic strongly regular graphs.
\end{proof}

\begin{remark}\label{remqiso}
\begin{itemize}
\item[(i)] Note that deleting a clique in a graph decreases the independence number by at most one. Let $T\subseteq \{1,\dots, 15\}$ and $V_T=\bigcup_{i\in T}V_i$, where $V_i$ is the partition of $V(G_{E_8})$ as before (see \eqref{vertexpartition}). For $|T|\geq 9$, the induced subgraphs of $G_{E_8}$ and $G^{\boldsymbol{w}}$ on $V_T$ are still quantum isomorphic and non-isomorphic. They are quantum isomorphic, since the submatrix on $V_T$ of $u$ (see \eqref{blockmatrix})  is still a quantum permutation matrix fulfilling the required relations. They are non-isomorphic, since the independence number of the subgraph of $G_{E_8}$ is less or equal to $8$ whereas the independence number of the subgraph of $G^{\boldsymbol{w}}$ is bigger or equal to $9$. 
\item[(ii)] It is noted in \cite[Section 10.39]{BVM}, that $G_{E_8}$ is distance-transitive. Thus, we know that both the quantum orbital algebra and the orbital algebra are $3$-dimensional. By \cite{BIK}, we know that $\overline{\Gamma}_1$ is a rank $4$ graph, which means that the orbital algebra is $4$-dimensional. Since $G_{E_8}$ and $\overline{\Gamma}_1$ are quantum isomorphic, we know that the quantum orbital algebra of $\overline{\Gamma}_1$ is $3$-dimensional (see \cite[Theorem 4.6]{LMR}). Thus, the graph $\overline{\Gamma}_1$ is the first example of a graph that has $3$-dimensional quantum orbital algebra and $4$-dimensional orbital algebra. For more on quantum orbital algebras, see \cite{LMR}. 
\item[(iii)] Looking at \cite[Table 2.2]{MS}, we see that $|\aut(G_{E_8})|=348364800$ and $|\aut(\overline{\Gamma}_1)|=1290240$. Thus, we have an example of quantum isomorphic graphs, where the sizes of the automorphism groups differ. This also implies that $\overline{\Gamma}_1$ has quantum symmetry, since the quantum automorphism groups of quantum isomorphic graphs are monoidally equivalent \cite[Theorem 4.7]{BCEHPSW}.
\end{itemize}
\end{remark}

\section{Switching quantum isomorphic graphs}\label{secswitching}

In this section, we will use Godsil-McKay switching to construct more quantum isomorphic, non-isomorphic graphs from the pair we obtained in the last section. We start with the definition of Godsil-McKay switching.

\begin{definition}[Godsil-McKay switching, \cite{GM}]\label{GMswitching}
Let $G$ be a graph and $\pi=\{C_1, \dots, C_k, D\}$ be a partition of the vertex set $V(G)$. Suppose that we have for $1 \leq i,j \leq k$, $v \in D$
\begin{itemize}
    \item[(i)] any two vertices in $C_i$ have the same number of neighbors in $C_j$,
    \item[(ii)] the vertex $v$ has either $0$, $\frac{n_i}{2}$ or $n_i$ neighbors in $C_i$, where $n_i:=|C_i|$.
\end{itemize}
The graph $G^{\pi, D}$ is obtain as follows. For each $v\in D$ and $1 \leq i \leq k$ such that $v$ has $\frac{n_i}{2}$ neighbors in $C_i$, delete these $\frac{n_i}{2}$ edges and join $v$ instead to the other $\frac{n_i}{2}$ vertices in $C_i$. 

Let $Q_m=\frac{2}{m}J_m-1_{M_m(\C)}$, where $J_m$ is the all-ones matrix. In terms of the adjacency matrix, we have $A_{G^{\pi, D}}=QA_GQ$, where $Q$ is the following matrix
\begin{align}
\begin{blockarray}{ccccccc}
&C_1&C_2&\dots&\dots&C_k&D\\
\begin{block}{c(cccccc)}
C_1&Q_{n_1}&0&0&\dots&\dots&0\\ C_2&0&Q_{n_2}&0&\dots&\dots&0\\ \vdots&0&0&Q_{n_3}&\dots&\dots&0\\\vdots&\vdots &\vdots&\vdots&\ddots&\dots&0\\
C_k&\vdots &\vdots&\vdots&\vdots&Q_{n_k}&0\\
D&0&0&0&0&0&1_{M_{|D|}(\C)} \\
\end{block}
\end{blockarray}\label{switchingmatrix}
\end{align}
\end{definition}

\begin{theorem}[\cite{GM}]
The graphs $G$ and $G^{\pi, D}$ are cospectral.
\end{theorem}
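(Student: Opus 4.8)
The plan is to deduce cospectrality directly from the matrix identity $A_{G^{\pi, D}}=QA_GQ$ recorded in Definition \ref{GMswitching}, by showing that $Q$ is a symmetric orthogonal involution and therefore realizes a similarity between $A_{G^{\pi,D}}$ and $A_G$. Recall that two graphs are cospectral precisely when their adjacency matrices share the same characteristic polynomial, equivalently the same eigenvalues with multiplicities, and that similar matrices always share their characteristic polynomial. So it suffices to establish that $Q=Q^{-1}$.

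First I would analyze a single block $Q_{n_i}=\frac{2}{n_i}J_{n_i}-1_{M_{n_i}(\C)}$. Since $J_{n_i}$ and the identity are symmetric, $Q_{n_i}$ is symmetric. Using the elementary identity $J_{n_i}^2=n_iJ_{n_i}$, a direct computation gives
\begin{align*}
Q_{n_i}^2=\frac{4}{n_i^2}J_{n_i}^2-\frac{4}{n_i}J_{n_i}+1_{M_{n_i}(\C)}=\frac{4}{n_i}J_{n_i}-\frac{4}{n_i}J_{n_i}+1_{M_{n_i}(\C)}=1_{M_{n_i}(\C)},
\end{align*}
so each $Q_{n_i}$ is an involution. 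The remaining block on $D$ is the identity, which is trivially a symmetric involution.

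Since $Q$ is block diagonal with symmetric involutive blocks, $Q$ is itself symmetric with $Q^2=1$, hence orthogonal and equal to its own inverse. Substituting $Q^{-1}=Q$ into the defining identity yields $A_{G^{\pi,D}}=QA_GQ=QA_GQ^{-1}$, which exhibits $A_{G^{\pi,D}}$ and $A_G$ as similar matrices. Similar matrices have the same characteristic polynomial, so $A_{G^{\pi,D}}$ and $A_G$ have identical spectra, and therefore $G$ and $G^{\pi,D}$ are cospectral.

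I do not expect a genuine obstacle to cospectrality itself: once the matrix identity is in hand, the conclusion follows purely from $Q$ being a symmetric involution, and the switching hypotheses (i) and (ii) are not even used in this spectral argument. The substantive role of conditions (i)--(ii) lies elsewhere, namely in guaranteeing that $QA_GQ$ again has $0$-$1$ entries and coincides with the combinatorially described switched graph, so that $G^{\pi,D}$ is a well-defined simple graph in the first place; verifying that correspondence is the only place where a computation is genuinely needed, and it is logically separate from the cospectrality statement proved here.
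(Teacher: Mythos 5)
Your proof is correct and is essentially the argument behind the cited result: given the identity $A_{G^{\pi,D}}=QA_GQ$ recorded in Definition \ref{GMswitching}, your computation that each block $Q_{n_i}=\frac{2}{n_i}J_{n_i}-1$ is a symmetric involution (via $J_{n_i}^2=n_iJ_{n_i}$) shows $Q=Q^{-1}$, so $A_{G^{\pi,D}}$ and $A_G$ are similar and hence cospectral. The paper gives no proof of its own here --- it cites \cite{GM} --- and your closing remark correctly identifies where the switching hypotheses (i) and (ii) actually matter, namely in ensuring that $QA_GQ$ is again a $0$-$1$ matrix realizing the combinatorially described graph $G^{\pi,D}$, which is the part of Godsil--McKay's theorem that the paper's definition takes as given.
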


The next theorem shows that we can find new pairs of quantum isomorphic, non-isomorphic graphs from certain vertex partitions compatible with the block structure of a quantum permutation matrix associated to a quantum isomorphism between the graphs. 

\begin{theorem}\label{thm:qisoGM}
Let $G_1$ and $G_2$ be quantum isomorphic graphs, where there exists a quantum permutation matrix $u$ with $uA_{G_1}=A_{G_2}u$ of the form
\begin{align}
\begin{blockarray}{cccccc}
&V_1&V_2&\dots&\dots&V_m\\
\begin{block}{c(ccccc)}
V_1&u^{(1)}&0&0&\dots&0\\ V_2&0&u^{(2)}&0&\dots&0\\ \vdots&0&0&u^{(3)}&\dots&0\\\vdots&\vdots &\vdots&\vdots&\ddots&0\\
V_m&0&0&0&0&u^{(m)}\\
\end{block}
\end{blockarray}\label{switchqpermmatrix}
\end{align}
for some partition $\{V_1, \dots, V_m\}$  of the vertex sets (we can label both vertex sets by $V$ as quantum isomorphic graphs have the same number of vertices). Let $\{S_1, \dots, S_{k+1}\}$ be a partition of $[m]$ and define a partition $\pi=\{C_1, \dots, C_k, D\}$ of the vertex set by setting $C_i:=\cup_{s\in S_i}V_j$ and $D:=\cup_{s\in S_{k+1}}V_j$.
If $G_1$ and $G_2$ fulfill the properties $(i)$ and $(ii)$ of Definition \ref{GMswitching} with respect to $\pi$, we can use Godsil-McKay switching and the graphs $G_1^{\pi, D}$ and $G_2^{\pi, D}$ are quantum isomorphic. 
\end{theorem}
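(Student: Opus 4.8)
The plan is to prove that the \emph{same} quantum permutation matrix $u$ already intertwines the switched adjacency matrices, so no new magic unitary has to be constructed. Recall from Definition \ref{GMswitching} that the switching operator $Q$ in \eqref{switchingmatrix} is a real symmetric matrix whose diagonal blocks $Q_{n_i}=\frac{2}{n_i}J_{n_i}-1_{M_{n_i}(\C)}$ satisfy $Q_{n_i}^2=1_{M_{n_i}(\C)}$; hence $Q^2$ is the identity and $Q=Q^{-1}=Q^{*}$. Moreover each $Q_{n_i}$ has all row and column sums equal to $1$, and so does $Q$. Since $G_1$ and $G_2$ both satisfy conditions $(i)$ and $(ii)$ with respect to $\pi$, Godsil--McKay switching is valid for each graph, and the switched graphs have adjacency matrices $A_{G_1^{\pi,D}}=QA_{G_1}Q$ and $A_{G_2^{\pi,D}}=QA_{G_2}Q$.

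The heart of the argument is the claim that $u$ commutes with $Q$, that is $uQ=Qu$. First I would observe that the partition $\{V_1,\dots,V_m\}$ refines $\pi$, so the block form \eqref{switchqpermmatrix} makes $u$ block diagonal with respect to $\pi$ as well, while $Q$ is block diagonal with respect to $\pi$ with blocks $Q_{n_i}$ on the $C_i$ and the identity on $D$. It therefore suffices to check commutation block by block. On $D$ the block of $Q$ is the identity, so commutation is trivial. On a block $C_i$ the crucial point is that $C_i$ is a union of \emph{entire} blocks $V_j$, so the restriction $u|_{C_i}$ has every row sum and every column sum equal to $1$; in other words $u|_{C_i}$ is itself a magic unitary on $C_i$. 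Consequently $J_{n_i}\,u|_{C_i}=u|_{C_i}\,J_{n_i}=J_{n_i}$, where $J_{n_i}$ is the all-ones matrix, so $u|_{C_i}$ commutes with $J_{n_i}$ and hence with $Q_{n_i}$. Assembling the blocks yields $uQ=Qu$.

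With commutation in hand the conclusion is a short computation:
\begin{align*}
uA_{G_1^{\pi,D}}=uQA_{G_1}Q=QuA_{G_1}Q=QA_{G_2}uQ=QA_{G_2}Qu=A_{G_2^{\pi,D}}u,
\end{align*}
where the middle equality uses the hypothesis $uA_{G_1}=A_{G_2}u$ and the others use $uQ=Qu$ and $A_{G_i^{\pi,D}}=QA_{G_i}Q$. Since $u$ itself is unchanged it is still a quantum permutation matrix, so it witnesses $A_{G_1^{\pi,D}}u=uA_{G_2^{\pi,D}}$ and $G_1^{\pi,D}$ and $G_2^{\pi,D}$ are quantum isomorphic.

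I expect the main subtlety to be making explicit where each hypothesis is used. The block-diagonal shape \eqref{switchqpermmatrix} enters only through the fact that each $C_i$ is a union of complete $V_j$-blocks, which is exactly what forces $u|_{C_i}$ to be a magic unitary and thus to commute with $J_{n_i}$; if $\pi$ did not arise from coarsening $\{V_1,\dots,V_m\}$ this step would break. The switching conditions $(i)$ and $(ii)$, by contrast, play no part in the intertwining relation itself — they serve only to guarantee that $QA_{G_1}Q$ and $QA_{G_2}Q$ are genuine $0/1$ adjacency matrices, i.e. that the two switched graphs exist as graphs. I would state this separation of roles clearly in the final write-up.
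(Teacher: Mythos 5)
Your proposal is correct and follows essentially the same route as the paper: both proofs reduce to showing $uQ=Qu$ by observing that the restriction of $u$ to each $C_i$ is itself a quantum permutation matrix (since $C_i$ is a union of whole $V_j$-blocks) and hence commutes with $Q_{n_i}=\frac{2}{n_i}J_{n_i}-1$, after which conjugating the intertwining relation by $Q$ finishes the argument. Your phrasing of the commutation via $J_{n_i}u|_{C_i}=u|_{C_i}J_{n_i}=J_{n_i}$ is just a conceptual repackaging of the paper's entrywise row/column-sum computation, and your closing remark on the role of conditions $(i)$ and $(ii)$ matches the paper's implicit use of them.
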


\begin{proof}
We first show $uQ=Qu$ for $Q$ as in \eqref{switchingmatrix} and $u$ as in \eqref{switchqpermmatrix}. By the block form of $Q$ and $u$ as well as reordering the $V_i$'s, we have $uQ=Qu$ if and only if $(\bigoplus_{s\in S_i}u^{(i)})Q_{n_i}=Q_{n_i}(\bigoplus_{s\in S_i}u^{(i)})$. Since the matrices $u_{S_i}:=\bigoplus_{s\in S_i}u^{(i)}$ are also quantum permutation matrices, we compute
\begin{align*}
    (u_{S_i}Q_{n_i})_{kl}=\frac{2}{n_i}\sum_j (u_{S_i})_{kj}- (u_{S_i})_{kl}=\frac{2}{n_i}-(u_{S_i})_{kl}=\frac{2}{n_i}\sum_j (u_{S_i})_{jl}-(u^{(i)})_{kl}=(Q_{n_i}u_{S_i})_{kl}, 
\end{align*}
where we used $\sum_j (u_{S_i})_{kj}=1=\sum_j (u_{S_i})_{jl}$. 

We have $uA_{G_1}=A_{G_2}u$ by assumption and know $A_{G_a^{\pi, D}}=QA_{G_a}Q$ for $a=1,2$ by Definition \ref{GMswitching}. Using this, we deduce
\begin{align*}
    uA_{G_1^{\pi, D}}=uQA_{G_1}Q=QA_{G_2}Qu=A_{G_2^{\pi, D}}u. 
\end{align*}
Thus $G_1^{\pi, D}$ and $G_2^{\pi, D}$ are quantum isomorphic. 
\end{proof}

Finally, we give the following example. 

\begin{example}
Let $G_{E_8}$ be the graph as in Definition \ref{defGE8} and $G^{\boldsymbol{w}}$ as in Definition \ref{defqisograph}, where we choose the vectors $w_i$ as $e_1-e_j$, $x_{\{1,j\}}$, $x_{\emptyset}$ for $j \in \{2,\dots, 8\}$. Choose the vertex set partition $\pi=\{V_1, \dots, V_{15}\}$ as in \eqref{vertexpartition} and let $D=V_{15}$. Then, for $G_{E_8}$ and $G^{\boldsymbol{w}}$, we know that 
\begin{itemize}
    \item[(i)] vertices in $V_i$ have $4$ neighbors in $V_j$,
    \item[(ii)] every $v\in D=V_{15}$ has $4=\frac{|V_i|}{2}$ neighbors in $V_i$. 
\end{itemize}
It is easy to see that $\pi$ satisfies the conditions in Theorem \ref{thm:qisoGM}. Therefore, we get that $(G_{E_8})^{\pi, V_{15}}$ and $(G^{\boldsymbol{w}})^{\pi, V_{15}}$ are quantum isomorphic. The graphs are non-isomorphic because of the following. Recall that we have $\alpha(G_{E_8})=8$ and $\alpha(G^{\boldsymbol{w}})=15$. Since $D$ is a clique, switching can at most increase or decrease the independence number by one, as at most one vertex in $D$ can be part of an independent set. Thus, we get $\alpha((G_{E_8})^{\pi, V_{15}})\leq 9$ and $\alpha((G^{\boldsymbol{w}})^{\pi, V_{15}})\geq 14$, which yields that the graphs are non-isomorphic. 

Using Sage (\cite{sagemath}), we see that the graphs $(G_{E_8})^{\pi, V_{15}}$ and $(G^{\boldsymbol{w}})^{\pi, V_{15}}$ are non-isomorphic to $G_{E_8}$ and $G^{\boldsymbol{w}}$. Since $(G_{E_8})^{\pi, V_{15}}$ and $(G^{\boldsymbol{w}})^{\pi, V_{15}}$ are cospectral to $G_{E_8}$ and $G^{\boldsymbol{w}}$, respectively, the graphs $(G_{E_8})^{\pi, V_{15}}$ and $(G^{\boldsymbol{w}})^{\pi, V_{15}}$ are strongly regular with parameters $(120, 63, 30, 36)$. 
\end{example}

\begin{remark}
\begin{itemize}
    \item[(i)] We do not know how many different pairs of quantum isomorphic, non-isomorphic strongly regular graphs we get by using Godsil-McKay switching several times. 
    \item[(ii)] Similar to Remark \ref{remqiso} $(i)$, we can delete cliques and obtain more quantum isomorphic, non-isomorphic graphs in that way. 
\end{itemize}
\end{remark}

\paragraph{Acknowledgments.}\phantom{a}\newline
The author has received funding from the European Union's Horizon 2020 research and innovation programme under the Marie Sklodowska-Curie grant agreement No. 101030346. He thanks David Roberson for helpful discussions on quantum isomorphisms and graph switching. 
\bibliographystyle{plainurl}
\bibliography{qisostronglyregular}

\end{document}